\documentclass[11pt]{amsart}
\usepackage{amsmath,amssymb,amsthm}
\usepackage[margin=1.1in]{geometry}
\usepackage{enumitem}
\usepackage{booktabs}
\usepackage{tikz}
\usepackage[colorlinks=true,linkcolor=blue!50!black,citecolor=blue!50!black,urlcolor=blue!50!black]{hyperref}

\newtheorem{theorem}{Theorem}[section]
\newtheorem{lemma}[theorem]{Lemma}
\newtheorem{proposition}[theorem]{Proposition}

\newtheorem{thmx}{Theorem}

\theoremstyle{definition}
\newtheorem{definition}[theorem]{Definition}
\newtheorem{example}[theorem]{Example}
\newtheorem{question}[theorem]{Question}
\theoremstyle{remark}
\newtheorem{remark}[theorem]{Remark}

\newcommand{\Z}{\mathbb{Z}}
\newcommand{\R}{\mathbb{R}}
\newcommand{\PP}{\mathbb{P}}
\newcommand{\cone}{\operatorname{cone}}
\newcommand{\conv}{\operatorname{conv}}
\newcommand{\spn}{\operatorname{span}}
\newcommand{\ud}{\operatorname{ud}}
\newcommand{\udind}{\operatorname{ud}_{\mathrm{ind}}}
\newcommand{\V}{\mathcal{V}}
\newcommand{\E}{\mathcal{E}}
\newcommand{\pair}[2]{\langle #1, #2\rangle}
\newcommand{\AP}{\textup{(AP)}}

\title[Unimodular defect and antipodal lattice points]{Unimodular defect and antipodal lattice points \\ of smooth reflexive polytopes}
\author{Binnan Tu}

\subjclass[2020]{Primary 52B20; Secondary 52C07, 05B35}
\keywords{Smooth Fano polytope, smooth reflexive polytope, monotone polytope, Ewald's conjecture, unimodular configuration}

\begin{document}

\begin{abstract}
Let $P$ be a smooth reflexive polytope and let $Q=P^{*}$ be the dual of smooth Fano polytope.
Ewald's conjecture (1988) predicts that the lattice points of $P\cap(-P)$ contain a lattice basis.
In general it is not even known whether $P\cap(-P)$ contains a nonzero lattice point; we call this weakest form of the conjecture the \emph{antipodal condition}.
We define the \emph{unimodular defect} of a finite set of lattice vectors as the least number of vectors that must be removed to leave a unimodular set, and the unimodular defect of $Q$ as that of its vertex set.
Defect zero gives the unimodular smooth Fano polytopes, for which Ewald's conjecture is known.
Our main result is that every smooth Fano polytope of unimodular defect at most two has a \emph{flat ridge}, a ridge $W$ with $|\det(v,W)|\le 1$ for all vertices $v$; consequently its dual satisfies the antipodal condition.
We also consider removed sets of pairwise non-adjacent vertices.
Using results of Nill on independent vertices, we show that every such set has at most two elements unless $Q$ is unimodular, so the antipodal condition holds whenever such a set exists.
\end{abstract}

\maketitle

\section{Introduction}

\subsection{Ewald's conjecture and antipodal lattice points}

Let $N\cong\Z^n$ be a lattice, $M=\operatorname{Hom}(N,\Z)$ its dual lattice, and $\pair{\cdot}{\cdot}\colon N_\R\times M_\R\to\R$ the pairing, where $N_\R=N\otimes_\Z\R$ and $M_\R=M\otimes_\Z\R$.
A \emph{smooth Fano polytope} is a full-dimensional lattice polytope $Q\subset N_\R$ with the origin in its interior such that the vertices of every facet of $Q$ form a $\Z$-basis of $N$.
Its dual
\[
P=Q^{*}=\{m\in M_\R:\ \pair{v}{m}\ge -1\ \text{for all } v\in Q\}
\]
is a \emph{smooth reflexive polytope}, and every smooth reflexive polytope arises in this way.
Smooth Fano polytopes $Q$ correspond to smooth toric Fano varieties $X=X_Q$: the cones over the faces of $Q$ form the fan of $X$, and $P$ is the polytope of the anticanonical divisor $-K_X$ \cite{Batyrev99,CLS}.
In symplectic geometry, smooth reflexive polytopes are known as \emph{monotone polytopes}; they are the moment polytopes of monotone symplectic toric manifolds \cite{McDuff11,CPS}.
In each dimension there are finitely many of them up to unimodular equivalence, and they have been classified up to dimension $9$ \cite{Obro07,LP}.

We write $\V=\V(Q)$ for the vertex set of $Q$ and
\[
\E(P)=P\cap(-P)\cap M=\{m\in M:\ |\pair{v}{m}|\le 1\ \text{for all } v\in\V\}
\]
for the set of lattice points $m$ of $P$ whose antipode $-m$ also lies in $P$.
In 1988 Ewald \cite{Ewald88} asked whether every smooth Fano polytope is unimodularly equivalent to a lattice polytope contained in the cube $[-1,1]^n$.
In terms of the dual polytope this is the following statement, known as \emph{Ewald's conjecture}:
\begin{quote}
\emph{For every smooth reflexive polytope $P$, the set $\E(P)$ contains a $\Z$-basis of $M$.}
\end{quote}
The conjecture has been verified by computer for $n\le 5$ by Kreuzer and Nill \cite[Proposition~4.9]{KN09} and for $n\le 7$ by {\O}bro \cite[\S4.1]{ObroThesis}, who checked the stronger property that $\E(P)\cap F$ contains a lattice basis for every facet $F$ of $P$ (see \cite[Theorem~3.9 and Remark~3.10]{CPS}).
Proofs are known only for special classes.
Crespo, Pelayo and Santos \cite{CPS} proved the conjecture for \emph{deeply monotone} polytopes, the first broad class in arbitrary dimension, and studied its behaviour under fibre bundles.
The author \cite{Tu24} showed it for \emph{unimodular smooth Fano polytopes}, those whose vertex set is a unimodular configuration, and proved that every deeply monotone polytope is the dual of a unimodular smooth Fano polytope.

\begin{definition}[Antipodal condition]\label{def:AP}
A smooth reflexive polytope $P$ (or its dual $Q$) satisfies the \emph{antipodal condition} \AP{} if $\E(P)\neq\{0\}$, that is, if $P$ contains a pair of antipodal lattice points $\pm m\ne 0$.
\end{definition}

Payne says that Ewald's conjecture has been verified in dimension at most seven, but that ``it remains unknown in higher dimensions whether there exists a single nonzero lattice point'' in $P\cap(-P)$ \cite[Remark~4.6]{Payne09}.
McDuff makes the same observation \cite{McDuff11}, and Crespo, Pelayo and Santos repeat that ``it is not even known whether there is a monotone polytope with $\E(P)=\{0\}$'' \cite[Section~1]{CPS}; they also ask for the minimum of $|\E(P)|$ in each dimension and remark that it could be $1$ \cite[Section~6.2]{CPS}.
Since $\E(P)$ is centrally symmetric, \AP{} says exactly that $|\E(P)|\ge 3$.
By computer, \AP{} is known for $n\le9$: Brendel reports that {\O}bro (for $n\le8$) and Paffenholz (for $n=9$) checked that every facet of every smooth reflexive polytope of dimension $n\le9$ contains a nonzero point of $\E(P)$ \cite[\S3.4]{Brendel}.
Apart from the classes for which Ewald's conjecture is known, the only general result on \AP{} that we are aware of is that it passes from the fibre to the total space of a monotone bundle \cite[Proposition~5.8]{CPS}.

As explained in Section~\ref{sec:prelim}, \AP{} holds if and only if the lattice width of $Q$ equals $2$, the smallest value possible for a lattice polytope with an interior lattice point.
Ewald's conjecture asks for $n$ such directions forming a lattice basis.

\subsection{Why the set $\E(P)$ matters}

\emph{Toric geometry.}
Let $D_v$ be the torus-invariant prime divisor of $X$ corresponding to the vertex $v$.
The lattice points of $P$ index the monomial sections of $\mathcal{O}_X(-K_X)$, and the divisor of the character $\chi^m$ is $\sum_{v}\pair{v}{m}D_v$ \cite[Proposition~4.1.2]{CLS}.
Hence $m\in\E(P)$ if and only if all coefficients of $\operatorname{div}(\chi^m)$ lie in $\{-1,0,1\}$: the condition \AP{} says that $X$ carries a non-constant character whose divisor of zeros and whose divisor of poles are both reduced.

\emph{Frobenius splittings.}
For an integer $q\ge2$, multiplication by $q$ on $N$ induces a toric endomorphism $F_q\colon X\to X$.
A splitting is an $\mathcal{O}_X$-linear retraction of the natural map $\mathcal{O}_X\to(F_q)_*\mathcal{O}_X$.
The variety $X$ is \emph{diagonally split at $q$} if the corresponding endomorphism of $X\times X$ admits a splitting compatible with the diagonal, meaning that it preserves the ideal sheaf of the diagonal \cite{Payne09}.
Payne calls
\[
\mathbb{F}_X=\{u\in M_\R:\ |\pair{v}{u}|\le1\ \text{for all }v\in\V\}=P\cap(-P)
\]
the \emph{diagonal splitting polytope}.
His criterion states that $X$ is diagonally split at $q$ if and only if $\operatorname{int}(\mathbb{F}_X)$ contains a representative of every class in $\frac1q M/M$ \cite[Theorem~1.2]{Payne09}.
For $q=2$, a point $u=m/2$ lies in this interior exactly when $|\pair{v}{m}|<2$ for all $v\in\V$.
Since these pairings are integers, this is equivalent to $m\in\E(P)$; under $\frac12 M/M\cong M/2M$, the criterion is therefore the surjectivity of the reduction map $\E(P)\to M/2M$.
For a complete toric variety, diagonal splitting implies that the section rings of nef line bundles are normally presented and Koszul \cite[Theorem~1.4]{Payne09}.
Chou, Hering, Payne, Tramel and Whitney \cite{CHPTW18} showed that a toric variety is diagonally split at every $q\ge2$ if and only if its primitive ray generators form a unimodular configuration.
For $X_Q$, this means that $Q$ is a unimodular smooth Fano polytope in the sense of \cite{Tu24}.
This link between $\E(P)$ and unimodularity motivates the present paper.

\emph{Symplectic geometry.}
McDuff \cite{McDuff11} introduced \emph{probes} to displace Lagrangian torus fibres of a symplectic toric manifold.
For a monotone polytope $P$ she proved that every point of the interior of $P$ other than the origin can be displaced by a probe if and only if $P$ satisfies the \emph{star Ewald condition}, a condition requiring suitable points of $\E(P)$ near every face of $P$ \cite[Theorem~1.2]{McDuff11}; in that case every toric fibre other than the central one is displaceable.
The star Ewald condition holds in dimension at most three \cite{McDuff11}, fails for some polytopes in every dimension $n\ge6$ \cite{McDuff11,CPS}, and holds for the duals of unimodular smooth Fano polytopes \cite{Tu24} and for deeply monotone polytopes \cite{CPS}.
Brendel \cite{Brendel} uses the property that every facet of $P$ meets $\E(P)\setminus\{0\}$ in his study of real Lagrangian tori; by \cite[Lemma~2.10]{CPS} this property follows from Ewald's conjecture.
All these conditions are statements about how many points $\E(P)$ has and where they lie, and \AP{} is the common minimal requirement behind them.

\subsection{Unimodular defect}

A finite set $S\subset N$ is called \emph{unimodular} if $\det(s_1,\dots,s_n)\in\{0,\pm1\}$ for all $s_1,\dots,s_n\in S$; the determinant is computed in a $\Z$-basis of $N$ and is well defined up to sign.
The following notion makes sense for every finite set of lattice vectors.

\begin{definition}[Unimodular defect]\label{def:ud}
Let $S\subset N$ be finite.
A \emph{transversal} of $S$ is a subset $B\subseteq S$ such that $S\setminus B$ is unimodular.
The \emph{unimodular defect} of $S$ is
\[
\ud(S)=\min\{|B|:\ B \text{ is a transversal of } S\}.
\]
For a smooth Fano polytope $Q$ we put $\ud(Q)=\ud(\V(Q))$ and say that $Q$ is a smooth Fano polytope \emph{of unimodular defect $\ud(Q)$}.
\end{definition}

Thus $\ud(S)$ is the least number of vectors that must be removed from $S$ to make it unimodular.
Every superset of a transversal is again a transversal; in particular, if $S$ is unimodular, every subset of $S$ is a transversal.
A transversal is called \emph{minimal} if it has the smallest possible number of elements, $\ud(S)$, and \emph{inclusion-minimal} if no proper subset is a transversal.
Smooth Fano polytopes of unimodular defect $0$ are the unimodular smooth Fano polytopes.
Every smooth Fano polytope has finite unimodular defect: the complement of a facet is a transversal, so
\begin{equation}\label{eq:defect-bound}
\ud(Q)\le|\V(Q)|-n\le 2n,
\end{equation}
where the second inequality is Casagrande's bound $|\V(Q)|\le3n$ \cite{Casagrande06}.
The classes $\{Q:\ \ud(Q)\le k\}$, $k=0,1,2,\dots$, therefore form an increasing filtration of the set of all smooth Fano polytopes, whose first member is the class for which Ewald's conjecture is known.
In view of \cite{CHPTW18}, $\ud(Q)$ measures how far $X_Q$ is from being diagonally split at all $q$.

Two vertices of $Q$ are \emph{adjacent} if they are joined by an edge of $Q$, and a set of vertices is \emph{independent} if its elements are pairwise non-adjacent.
The \emph{independent unimodular defect} $\udind(Q)$ is the least cardinality of an independent transversal of $\V(Q)$, and $\udind(Q)=\infty$ if no transversal is independent.
Clearly $\ud(Q)\le\udind(Q)$, and $\udind(Q)=\ud(Q)$ if $\ud(Q)\le1$.

\subsection{Main results}

A \emph{ridge} of $Q$ is a face of codimension two; since the facets of $Q$ are simplices, a ridge is spanned by $n-1$ vertices of a facet, and we identify it with this set of vertices.
For $W=\{w_1,\dots,w_{n-1}\}$, write $\det(v,W)=\det(v,w_1,\dots,w_{n-1})$ in a fixed lattice basis of $N$; its absolute value is independent of this basis and the ordering of $W$.
We call a ridge $W$ \emph{flat} if $|\det(v,W)|\le1$ for every vertex $v$ of $Q$.
A flat ridge immediately yields a nonzero element of $\E(P)$ (Proposition~\ref{prop:flat}); geometrically, it is an edge of $P$ whose primitive direction vector $e$ satisfies $\pm e\in P$ (Remark~\ref{rem:flat-meaning}).

\begin{thmx}\label{thm:A}
Let $Q$ be a smooth Fano polytope of dimension $n\ge2$ and unimodular defect $\ud(Q)\le 2$.
Then $Q$ has a flat ridge.
In particular, the smooth reflexive polytope $P=Q^{*}$ satisfies the antipodal condition \AP.
\end{thmx}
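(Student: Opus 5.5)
We will find the flat ridge inside a facet that meets the transversal in a controlled way, and get \AP{} from Proposition~\ref{prop:flat}. First we record how flat ridges translate. Let $F=\{w_1,\dots,w_n\}$ be a facet, so its vertices form a $\Z$-basis of $N$, and write each vertex as $v=\sum_i a_i(v)\,w_i$ with integer coordinates. For the ridge $W=F\setminus\{w_j\}$ we have $\det(v,W)=\pm a_j(v)$. Hence $W$ is flat if and only if the $j$-th coordinate functional $e_j^*\in M$ of the dual basis satisfies $|a_j(v)|\le1$ for all $v\in\V$, which is exactly $e_j^*\in\E(P)$. So it suffices to find a facet $F$ and an index $j$ such that $|a_j(v)|\le 1$ for every vertex $v$.

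Fix a minimal transversal $B$ with $|B|\le2$, so that $U=\V\setminus B$ is unimodular. Our plan is to choose a facet $F$ with $|F\cap B|$ as large as possible, and to look for $j$ with $w_j\in F\setminus B$. Then $w_j$ lies in the unimodular set $U$.

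\emph{Vertices of $U$.} Take $v\in U$, and suppose first that $F\subseteq U$. Then $\det(v,F\setminus\{w_j\})$ is a determinant of $n$ elements of $U$, so it lies in $\{0,\pm1\}$. Thus every coordinate of $v$ is bounded by $1$, and it remains only to control the coordinates $a_j(b)$ for $b\in B$.

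\emph{Vertices of $B$.} For $b\in B$, write $b=\sum_i a_i(b)w_i$. Each vertex $b$ is adjacent to many facets. Replacing $F$ by a facet containing $b$ makes $b$ a basis vector, and then $a_i(b)\in\{0,1\}$ trivially.

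This suggests the following case split.
\begin{enumerate}[label=(\roman*)]
\item \emph{Some facet $F$ contains all of $B$.} Then the coordinates of the vertices in $B$ are trivial. For $v\in U$ and $w_j\in F\setminus B$, the quantity $\det(v,F\setminus\{w_j\})$ involves up to two vertices of $B$, so unimodularity of $U$ does not apply directly. To overcome this, we use the standard fact for smooth Fano polytopes that the coordinates of any vertex $v$ in a facet basis satisfy $\sum_i a_i(v)\le 1$ and $a_i(v)\ge -1$ for neighbours. We also use the unimodularity of $U$ after changing to an adjacent facet, which swaps one element of $B$ for an element of $U$ via a wall-crossing relation $w+w'=\sum c_i w_i$.
\item \emph{No facet contains $B$.} Then $|B|=2$ and the two vertices are non-adjacent. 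Here we take $F\ni b_1$ and must bound the coordinates $a_j(b_2)$.
\end{enumerate}

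The key tool in both cases is that $\det(v,F\setminus\{w_j\})$ is a $\pm1$-linear combination of determinants of $n$ elements of $U$, obtained by expanding the elements of $B$ in a basis drawn from $U$. If $U$ spans $N_\R$ and contains a basis, we express each $b\in B$ in that basis. A pigeonhole count over the $n-|B|\ge n-2$ choices of $j$ should then leave some $j$ for which all the offending coordinates vanish.

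We expect case~(i) with $|B|=2$, and in particular the low dimensions $n=2,3$, to be the main obstacle: there the pigeonhole argument has little slack. For these we would first try to use the known classification of smooth Fano polygons and threefolds to verify the claim directly. In higher dimensions we would invoke the projection or splitting structure of a Fano polytope having a vertex $b$ with large coordinates. For example, by Casagrande-type results, a large coordinate forces $-b$ or a related vertex to be present, which makes the corresponding coordinate functional small.
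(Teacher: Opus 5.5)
There is a genuine gap. What you have is a plan whose decisive steps are not arguments, and the plan rests on the wrong choice of ridge. You take $F$ with $|F\cap B|$ maximal and $w_j\in F\setminus B$, so your ridge $W=F\setminus\{w_j\}$ contains the bad vertices of $F$. Then, as you note yourself, $|\det(v,W)|\le1$ for good $v$ no longer follows from unimodularity of $U=\V\setminus B$, and nothing you propose restores it.

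The remaining steps do not fill this in:
\begin{enumerate}
\item Your ``key tool'' is false as stated. Expanding $b\in B$ in a basis taken from $U$ gives arbitrary integer coefficients, not $\pm1$. A sum of several determinants, each in $\{0,\pm1\}$, need not lie in $\{0,\pm1\}$.
\item The facts you quote ($\sum_i a_i(v)\le1$, and coordinate $-1$ across a wall) give no two-sided bound on a single coordinate of an arbitrary vertex. Coordinates $\le-2$ do occur.
\item The pigeonhole count, the appeal to the classification for $n=2,3$, and the ``Casagrande-type'' step are never made precise. None of them would give a statement valid for all $n$.
\end{enumerate}

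The missing idea is to do the opposite: choose $W$ disjoint from $B$. Then for every good $v\notin W$, the set $\{v\}\cup W$ is an $n$-subset of the unimodular set $U$, so $|\det(v,W)|\le1$ automatically. Only the at most two bad vertices need checking, and the cases close quickly.
\begin{itemize}
\item If $B\subseteq\{x,-x\}$, take $W=G\setminus\{x\}$ for a facet $G\ni x$. Then $|\det(\pm x,W)|=1$.
\item If $B=\{x,u\}$ is non-adjacent with $u\ne-x$, a height argument over a facet whose cone contains $x+u$ shows that $y=x+u$ is a vertex. Every facet through $y$ contains $x$ or $u$, which forces $y$ to be adjacent to $x$. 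For $G\supseteq\{x,y\}$ and $W=G\setminus\{x\}$ one gets $\det(u,W)=\det(y,W)-\det(x,W)=-\det(x,W)$.
\item If $B=\{u_0,u_1\}\subseteq G$ is adjacent (the case you flagged as the obstacle), put $R=G\setminus B$ and cross the walls $R\cup\{u_1\}$ and $R\cup\{u_0\}$. Lemma~\ref{lem:wall} gives the new vertices $y_1=-u_0+b\,u_1+r_1$ and $y_0=a\,u_0-u_1+r_0$ with $r_i\in\Z R$.
\begin{itemize}
\item If $y_0\neq y_1$, the good $n$-set $R\cup\{y_0,y_1\}$ has determinant $\pm(1-ab)$. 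This forces $ab\in\{0,1,2\}$, hence $|a|\le1$ or $|b|\le1$.
\item If $y_0=y_1$, then $a=b=-1$.
\item Since $|\det(u_0,R\cup\{y_1\})|=|b|$ and $|\det(u_1,R\cup\{y_1\})|=1$, and symmetrically for $y_0$ with $|a|$, one of $R\cup\{y_1\}$ and $R\cup\{y_0\}$ is a flat ridge.
\end{itemize}
\end{itemize}
This single determinant among good vertices replaces your pigeonhole and classification steps, and it works uniformly in $n$.
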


A transversal with two elements consists either of two non-adjacent or of two adjacent vertices, and Theorem~\ref{thm:A} covers both cases; the flat ridge is given explicitly in each case (Table~\ref{tab:certificates}).
The case of two adjacent vertices is the main point of the proof.
By~\eqref{eq:defect-bound}, Theorem~\ref{thm:A} applies in particular to all smooth Fano polytopes with at most $n+2$ vertices, that is, to the smooth toric Fano varieties of Picard number at most $2$, which were classified by Kleinschmidt \cite{Kleinschmidt88}.

Our second result concerns independent transversals.
It rests on the structure of independent sets of vertices (Proposition~\ref{prop:indep}), which is essentially due to Nill \cite{Nill05}: two non-adjacent vertices $x,u$ satisfy $u=-x$ or $x+u\in\V$, and three pairwise non-adjacent vertices $v_1,v_2,v_3$ satisfy $v_1+v_2+v_3=0$.

\begin{thmx}\label{thm:B}
Let $Q$ be a smooth Fano polytope with vertex set $\V$.
\begin{enumerate}[label=\textup{(\alph*)}]
\item\label{C:size} Every independent transversal of $\V$ has at most three elements.
If $Q$ is not unimodular, every independent transversal has at most two elements.
Equivalently, an independent transversal with three elements exists only if $Q$ is unimodular, and it is then not minimal, since $\emptyset$ is a transversal.
\item\label{C:defect} If $Q$ has an independent transversal, then $\udind(Q)=\ud(Q)\in\{0,1,2\}$.
\item\label{C:AP} If $Q$ has an independent transversal, then $Q$ has a flat ridge, and $P=Q^{*}$ satisfies \AP.
\item\label{C:forms} A minimal independent transversal is of one of the forms
\[
\emptyset,\qquad \{x\}\ \ (-x\notin\V),\qquad \{x,-x\},\qquad \{x,u\}\ \ (x+u\in\V,\ -x\notin\V,\ -u\notin\V).
\]
\end{enumerate}
\end{thmx}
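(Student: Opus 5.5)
The plan is to reduce everything to the structure of independent vertex sets (Proposition~\ref{prop:indep}) and to Theorem~\ref{thm:A}. Let $B$ be an independent transversal. Since $B$ is pairwise non-adjacent, Proposition~\ref{prop:indep} gives $|B|\le 3$: Nill's results show that four pairwise non-adjacent vertices cannot occur, since any three of them would sum to zero and this forces a contradiction. The same proposition gives the shape of $B$:
\begin{itemize}[label={}]
\item if $|B|=2$, then $B=\{x,u\}$ with $u=-x$ or $x+u\in\V$;
\item if $|B|=3$, then $v_1+v_2+v_3=0$.
\end{itemize}

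For part \ref{C:size}, suppose $B=\{v_1,v_2,v_3\}$ with $v_1+v_2+v_3=0$. I would show that $\V$ is then unimodular. The set $\V\setminus B$ is unimodular, and $v_3=-v_1-v_2$. By Proposition~\ref{prop:indep} applied to the pair $v_1,v_2$, either $v_2=-v_1$ or $v_1+v_2\in\V$; the first case is impossible here, so $v_1+v_2\in\V$, and hence $v_3=-(v_1+v_2)$. Moreover $v_1+v_2\notin B$.

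The key step is to control any determinant $\det(s_1,\dots,s_n)$ with entries in $\V$ that uses elements of $B$. I would express each $v_i$ through non-removed vertices using the relations $v_i+v_j=-v_k$ and $v_i+v_j\in\V$. For example, $v_1+v_2=-v_3$ and $v_1+v_2\in\V\setminus B$, and similarly for the other pairs. This shows that $-v_1,-v_2,-v_3\in\V\setminus B$.

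A determinant containing some $v_i$ then equals $\pm$ a determinant in which $v_i$ is replaced by $-v_i\in\V\setminus B$. This holds because $\det$ is linear and the set $\V\setminus B$ is symmetric enough. Repeated columns give $0$, and otherwise the result lies in $\{0,\pm1\}$. So $Q$ is unimodular, proving the second sentence of \ref{C:size}.

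The main obstacle is exactly this step: verifying that the three pairwise sums, hence $-v_i$, are genuine vertices outside $B$. I would first try to extract this directly from Nill's description of triples of independent vertices.

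For \ref{C:forms}, take a minimal independent transversal $B$. If $|B|=1$, so $B=\{x\}$ with $-x\in\V\setminus B$, then $\det$ with $x$ equals $\pm\det$ with $-x$. This would make $\V$ unimodular and $\emptyset$ a transversal, contradicting minimality; hence $-x\notin\V$.

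If $B=\{x,u\}$ with $x+u\in\V$ and, say, $-x\in\V$, I would replace $x$ by $-x$ in the same way. This shows that $\{u\}$ is already a transversal, again contradicting minimality. Together with the bound $|B|\le 3$ and the case $|B|=3$ being non-minimal, this gives the list in \ref{C:forms}.

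For \ref{C:defect}, note that any independent transversal contains a minimal independent one of size $\le 2$, so $\udind(Q)\le2$. I then need $\ud=\udind$. If $\ud(Q)\le1$ this is already noted in the introduction. If $\ud(Q)=2$ and $\udind(Q)\le 2$, then equality holds. The remaining case is $\ud(Q)\le1<\udind(Q)=2$, which cannot occur: a transversal of size at most one is trivially independent.

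Part \ref{C:AP} is then immediate from Theorem~\ref{thm:A}, since $\ud(Q)\le\udind(Q)\le2$.
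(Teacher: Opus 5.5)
Your proposal is correct and follows the paper's proof. The case $|B|=3$ is exactly the exchange argument of Lemma~\ref{lem:basic}\ref{basic:exchange} applied with $-v_i\in\V\setminus B$. Proposition~\ref{prop:indep}\ref{B:hex} already supplies these $-v_i$, so the step you flag as the main obstacle is immediate. Parts (b) and (d) go the same way as in the paper.

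The only difference is in part (c). You invoke Theorem~\ref{thm:A} as a whole, whereas the paper uses only Propositions~\ref{prop:one} and~\ref{prop:nonadjacent}, so it never needs the adjacent case (Proposition~\ref{prop:adjacent}).
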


Thus the independent unimodular defect takes only the values $0,1,2,\infty$, and restricting to independent transversals never leads beyond unimodular defect two.
Example~\ref{ex:adjacent} shows that $\udind(Q)=\infty$ occurs for polytopes of unimodular defect $2$; these are covered by Theorem~\ref{thm:A} but not by Theorem~\ref{thm:B}.
Example~\ref{ex:hexagon} shows that independent transversals with three elements do occur for unimodular polytopes.

All arguments use two elementary tools.
The first is the \emph{height} $h_G(v)=\pair{v}{n_G}$ of a vertex $v$ over a facet $G$, where $n_G$ is the lattice vector with $\pair{g}{n_G}=1$ for $g\in G$: heights of vertices are integers, at most $1$, and equal to $1$ only on $G$ (Section~\ref{sec:prelim}).
The second is that for a facet $G$ and $x\in G$, the $x$-coordinate of a vector $z$ with respect to the basis $G$ equals $\pm\det(z,G\setminus\{x\})$ (Lemma~\ref{lem:coord}).
A flat ridge $W=G\setminus\{x\}$ is thus a facet basis $G$ together with a basis vector $x$ such that every vertex of $Q$ has $x$-coordinate in $\{-1,0,1\}$; the coordinate function itself is the desired point of $\E(P)$.

\subsection{Organisation}
Section~\ref{sec:prelim} collects preliminaries and Section~\ref{sec:ud} basic properties of the unimodular defect.
Section~\ref{sec:flat} introduces flat ridges, Section~\ref{sec:main} proves Theorem~\ref{thm:A}, and Section~\ref{sec:indep} proves Theorem~\ref{thm:B}.
Section~\ref{sec:examples} contains examples.
Section~\ref{sec:data} reports the computations and contains further remarks and open questions.

\section{Preliminaries}\label{sec:prelim}

Throughout, $Q\subset N_\R$ is a smooth Fano polytope of dimension $n\ge 2$ with vertex set $\V$, and $P=Q^*$.
We identify facets and ridges with their vertex sets; they contain $n$ and $n-1$ vertices, respectively.

For $z_1,\dots,z_n\in N_\R$ we write $\det(z_1,\dots,z_n)$ for the determinant of their coordinates in a fixed $\Z$-basis of $N$; for a set $W=\{w_1,\dots,w_{n-1}\}$ we write $\det(z,W)=\det(z,w_1,\dots,w_{n-1})$.
Only absolute values and ratios of such determinants will matter, so the choice of the basis and the order of $W$ play no role.
For a facet $G$ we have $|\det G|=1$.

Let $G$ be a facet.
Since $G$ is a $\Z$-basis of $N$, there is a unique $n_G\in M$ with $\pair{g}{n_G}=1$ for all $g\in G$.
The hyperplane $\{\pair{\cdot}{n_G}=1\}$ supports $Q$ along the facet $\conv(G)$, and the origin lies on the side $\{\pair{\cdot}{n_G}<1\}$; hence $Q\subseteq\{\pair{\cdot}{n_G}\le1\}$.
We call $h_G(z)=\pair{z}{n_G}$ the \emph{height} of $z\in N_\R$ over $G$.
Heights are linear in $z$, and they are integers on $N$.

Thus, for every vertex $v$ of $Q$, $h_G(v)\le1$, with equality exactly when $v\in G$; in particular $h_G(v)\le0$ for $v\notin G$.

The cones $\cone(G)$ over the facets cover $N_\R$, because the origin is an interior point of $Q$.
If $z\in N\cap\cone(G)$, then
\begin{equation}\label{eq:cone-coeff}
z=\sum_{g\in G}c_g\,g\qquad\text{with } c_g\in\Z_{\ge0}\ \text{ and }\ \sum_{g\in G}c_g=h_G(z),
\end{equation}
since $G$ is a $\Z$-basis and $h_G(g)=1$ for $g\in G$.
If also $z\in Q$, then $0\le\sum_gc_g\le1$, so $z=0$ or $z$ is a vertex of $G$.
In particular, the only boundary lattice points of $Q$ are its vertices.

Let $G$ be a facet and $x\in G$.
We denote by $x^*_G\in M$ the element of the basis dual to $G$ that corresponds to $x$, so that $\pair{x}{x^*_G}=1$ and $\pair{w}{x^*_G}=0$ for $w\in G\setminus\{x\}$.
Thus $\pair{z}{x^*_G}$ is the \emph{$x$-coordinate} of $z$ with respect to the basis $G$.

\begin{lemma}\label{lem:coord}
Let $G$ be a facet, $x\in G$ and $W=G\setminus\{x\}$.
Then for all $z\in N_\R$
\[
\pair{z}{x^*_G}=\frac{\det(z,W)}{\det(x,W)},\qquad\text{and hence}\qquad |\pair{z}{x^*_G}|=|\det(z,W)|.
\]
\end{lemma}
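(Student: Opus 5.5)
The plan is to use linearity of the determinant in its first argument and then compare both sides on the basis $G$. Fix $x\in G$ and $W=G\setminus\{x\}$. Both sides of the asserted identity are linear functionals of $z\in N_\R$: the left-hand side by definition of the pairing, and the right-hand side because $z\mapsto\det(z,W)$ is linear and $\det(x,W)\neq0$. Note that $\det(x,W)=\pm\det G$, and $|\det G|=1$ since $G$ is a $\Z$-basis. It therefore suffices to check that the two functionals agree on the basis $G$ of $N_\R$.

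First evaluate at $z=x$. The left-hand side is $\pair{x}{x^*_G}=1$ by definition of the dual basis, and the right-hand side is $\det(x,W)/\det(x,W)=1$. Next evaluate at $z=w\in W$. The left-hand side is $\pair{w}{x^*_G}=0$. On the right, $\det(w,W)$ has the vector $w$ appearing twice among its arguments, so it vanishes. Hence the two functionals coincide on $G$, and therefore on all of $N_\R$, which proves the first formula.

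For the second formula, take absolute values and use $|\det(x,W)|=|\det G|=1$. This gives $|\pair{z}{x^*_G}|=|\det(z,W)|$.

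The only point needing care is that the ordering and sign conventions for $\det(z,W)$ are fixed once and used consistently in numerator and denominator, so the ratio is independent of them. There is no genuine obstacle here; equivalently, one could cite Cramer's rule for the coordinates of $z$ in the basis $G$.
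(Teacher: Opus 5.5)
Your proof is correct and follows essentially the same route as the paper. Both sides are linear in $z$ and agree on the basis $G$ (value $1$ at $x$, value $0$ on $W$), and the absolute-value identity then follows from $|\det(x,W)|=|\det G|=1$.
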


\begin{proof}
Both sides of the first identity are linear in $z$ and agree on the basis $G$: they equal $1$ at $z=x$ and $0$ at $z=w\in W$.
The second identity follows from $|\det(x,W)|=|\det G|=1$.
\end{proof}

\begin{lemma}[\cite{Obro07}]\label{lem:wall}
Let $W$ be a ridge and let $G=W\cup\{x\}$ and $G'=W\cup\{x'\}$ be the two facets containing it.
Then
\[
x'=-x+\sum_{w\in W}a_w\,w\qquad\text{with } a_w\in\Z .
\]
In particular $x'^*_{G'}=-x^*_G$, and $h_{G'}(x)=h_G(x')=s-1\le 0$, where $s=\sum_{w\in W}a_w$.
\end{lemma}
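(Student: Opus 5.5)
The plan is to first establish the integrality statement, and then read off the two consequences from it.

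\emph{Integrality.} Since $G$ and $G'$ are facets, each is a $\Z$-basis of $N$. Write $x'$ in the basis $G=W\cup\{x\}$ as
\[
x'=c\,x+\sum_{w\in W}a_w\,w,\qquad c,a_w\in\Z .
\]
By Lemma~\ref{lem:coord}, $c=\pair{x'}{x^*_G}=\det(x',W)/\det(x,W)$. Because $G'$ is a basis, $|\det(x',W)|=1$, and $|\det(x,W)|=1$ as well, so $c=\pm1$.

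\emph{The sign of $c$.} To rule out $c=1$, note that $\conv(G)$ and $\conv(G')$ are the two facets through the ridge $\conv(W)$. Hence $x$ and $x'$ lie on opposite sides of the hyperplane $\spn(W)$; otherwise the cones over the two facets would overlap in their interiors, contradicting that the cones over facets form a fan. The function $x^*_G$ vanishes on $\spn(W)$ and equals $1$ at $x$, so it must be negative at $x'$. Therefore $c=-1$.

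\emph{Dual basis vector.} Next I identify $x'^*_{G'}$. The functional $-x^*_G$ vanishes on $W$, and
\[
\pair{x'}{-x^*_G}=-c=1 .
\]
Since the dual basis element is unique, $x'^*_{G'}=-x^*_G$.

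\emph{Heights.} Applying $n_G$ to $x'$ gives
\[
h_G(x')=-1+\sum_{w\in W}a_w=s-1 .
\]
For the other height, substitute $x=-x'+\sum_w a_w\,w$, which gives
\[
h_{G'}(x)=-1+s .
\]
Finally, $x'\notin G$, and the Preliminaries show that a vertex outside $G$ has height at most $0$. Hence $s-1\le 0$.

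The only step that is more than formal is the sign argument, and I expect no real obstacle there. It needs only the standard fact that the two facets through a ridge lie on opposite sides of the hyperplane $\spn(W)$.
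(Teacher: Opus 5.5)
Your proof is correct and follows the paper's argument step by step: Lemma~\ref{lem:coord} gives $|c|=1$, disjointness of the interiors of facet cones gives $c=-1$, and the dual basis vector and the heights are read off from the relation. The only difference is that the paper makes your ``opposite sides'' step explicit: when $c=1$, the point $\sum_{w\in W}w+\varepsilon x'$ has, for small $\varepsilon>0$, positive coordinates in both bases $G$ and $G'$, so it lies in the interior of both cones.
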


\begin{proof}
Write $x'=c\,x+\sum_{w}a_w w$ in the basis $G$; the coefficients are integers.
By Lemma~\ref{lem:coord}, $|c|=|\det(x',W)|=|\det G'|=1$.
Suppose $c=1$, and let $p$ be the sum of the elements of $W$.
For small $\varepsilon>0$ the point $p+\varepsilon x'=\varepsilon x+\sum_w(1+\varepsilon a_w)w$ has only positive coordinates in the basis $G$ and only positive coordinates in the basis $G'$, so it lies in the interior of both $\cone(G)$ and $\cone(G')$.
This is impossible, since the cones over two distinct facets of $Q$ have disjoint interiors.
Hence $c=-1$.

The functional $-x^*_G$ vanishes on $W$ and takes the value $1$ at $x'$, so it equals $x'^*_{G'}$.
Applying $n_G$ to the relation gives $h_G(x')=s-1$, which is $\le0$ because $x'\notin G$; applying $n_{G'}$ to $x=-x'+\sum_w a_ww$ gives $h_{G'}(x)=s-1$.
\end{proof}

By Lemma~\ref{lem:wall} the functional $x^*_G$ depends, up to sign, only on the ridge $W=G\setminus\{x\}$: it is the primitive lattice vector, unique up to sign, that vanishes on $W$.
We denote it by
\begin{equation}\label{eq:mW}
m_W:=x^*_G\in M\qquad(\text{defined up to sign}).
\end{equation}

The \emph{lattice width} of $Q$ is
$\min_{m\in M\setminus\{0\}}\left(\max_{z\in Q}\pair{z}{m}-\min_{z\in Q}\pair{z}{m}\right).$
Since $0$ is an interior point of $Q$, every nonzero $m\in M$ takes both positive and negative values on its vertices.
These values are integers, so the width in the direction $m$ is at least $2$, with equality exactly when $m\in\E(P)$.
Consequently, $Q$ satisfies \AP{} if and only if its lattice width equals $2$.

\section{Unimodular defect}\label{sec:ud}

Recall from Definition~\ref{def:ud} the notions of a unimodular set, of a transversal and of the unimodular defect $\ud(S)$.
Once a transversal $B$ is fixed, we call its elements \emph{bad} and the elements of $S\setminus B$ \emph{good}.

\begin{remark}\label{rem:TU}
A set $S$ is unimodular if and only if every linearly independent $n$-element subset of $S$ is a $\Z$-basis of $N$.
If $S$ contains a $\Z$-basis $F$ of $N$, as the vertex set of a smooth Fano polytope does, this holds if and only if the matrix of the coordinates of the elements of $S\setminus F$ in the basis $F$ is totally unimodular \cite[\S19.1]{Schrijver}.
In matroid terms, such sets are the unimodular representations of regular matroids, whose structure is described by Seymour's decomposition theorem \cite{Seymour80}; see \cite{Tu24} for the consequences for unimodular smooth Fano polytopes.
Let $\mathcal{H}(S)$ be the hypergraph on $S$ whose edges are the $n$-element subsets $T\subseteq S$ with $|\det T|\ge2$.
A set $B\subseteq S$ is a transversal of $S$ if and only if it meets every edge of $\mathcal{H}(S)$; so $\ud(S)$ is the transversal number of $\mathcal{H}(S)$, which explains the terminology.
\end{remark}

\begin{example}
In $N=\Z^2$ let $S=\{e_1,\ e_2,\ e_1+e_2,\ e_1-e_2\}$.
The only pair with determinant $\pm2$ is $\{e_1+e_2,e_1-e_2\}$, so $\{e_1+e_2\}$ and $\{e_1-e_2\}$ are the minimal transversals and $\ud(S)=1$.
By contrast, all five smooth Fano polygons are unimodular.
\end{example}

\begin{lemma}\label{lem:basic}
Let $S\subset N$ be finite.
\begin{enumerate}[label=\textup{(\alph*)}]
\item\label{basic:mono} If $S'\subseteq S$, then $\ud(S')\le\ud(S)$.
\item\label{basic:GL} $\ud(\varphi(S))=\ud(S)$ for every automorphism $\varphi$ of the lattice $N$.
\item\label{basic:bound} If $S$ contains a $\Z$-basis of $N$, then $\ud(S)\le|S|-n$.
\item\label{basic:exchange} Let $B$ be a transversal of $S$ and $b\in B$ with $-b\in S\setminus B$. Then $B\setminus\{b\}$ is a transversal of $S$.
In particular, if $-b\in S\setminus B$ for every $b\in B$, then $S$ is unimodular.
\end{enumerate}
\end{lemma}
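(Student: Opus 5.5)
Parts \ref{basic:mono}--\ref{basic:bound} follow directly from the definitions, and the real content is \ref{basic:exchange}. For \ref{basic:mono}: if $B$ is a transversal of $S$, then $S'\setminus(B\cap S')\subseteq S\setminus B$. Since every subset of a unimodular set is unimodular, $B\cap S'$ is a transversal of $S'$, and $|B\cap S'|\le|B|$. For \ref{basic:GL}: an automorphism $\varphi$ changes every determinant by the factor $\det\varphi=\pm1$, so $T\subseteq S$ is unimodular if and only if $\varphi(T)$ is. Hence $B\mapsto\varphi(B)$ is a bijection between the transversals of $S$ and those of $\varphi(S)$ that preserves cardinality. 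For \ref{basic:bound}: if $F\subseteq S$ is a $\Z$-basis, then every $n$-subset of $F$ is either $F$ itself, with determinant $\pm1$, or has repeated entries (determinant $0$). So $F$ is unimodular, $S\setminus F$ is a transversal, and $|S\setminus F|=|S|-n$.

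For \ref{basic:exchange}, put $S_0=S\setminus B$, which is unimodular, and $S_1=S_0\cup\{b\}$; we must show that $S_1$ is unimodular. Take $n$ elements $s_1,\dots,s_n$ of $S_1$. If none of them equals $b$, the determinant lies in $\{0,\pm1\}$ because $S_0$ is unimodular. If $b$ occurs twice, the determinant is $0$. Otherwise, say $s_1=b$ and $s_2,\dots,s_n\in S_0$. Multilinearity gives
\[
\det(b,s_2,\dots,s_n)=-\det(-b,s_2,\dots,s_n),
\]
and all entries on the right lie in $S_0$ because $-b\in S_0$. So this determinant also lies in $\{0,\pm1\}$. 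Hence $S_1$ is unimodular and $B\setminus\{b\}$ is a transversal.

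The only subtle point is the final claim of \ref{basic:exchange}, because after removing one element from $B$ the hypothesis must still hold for the next one. This is fine: the condition $-b'\in S\setminus B$ for $b'\in B\setminus\{b\}$ implies $-b'\in S\setminus(B\setminus\{b\})$, since the good set only grows. Inducting on $|B|$, we can remove the elements of $B$ one at a time and reach the transversal $\emptyset$, which means that $S$ is unimodular. I expect no real obstacle. The key observation is simply that a determinant's absolute value is unchanged when one column is negated.
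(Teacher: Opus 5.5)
Your proof is correct and essentially matches the paper's: (a)--(c) are the same one-line observations, and for (d) you use the same move, replacing the column $b$ by $-b$ to reduce to a determinant of elements of $S\setminus B$. The paper leaves the final clause of (d) implicit, while you prove it by induction and check that the hypothesis survives each removal.
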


\begin{proof}
\ref{basic:mono} If $B$ is a transversal of $S$, then $B\cap S'$ is a transversal of $S'$, because $S'\setminus B\subseteq S\setminus B$.
\ref{basic:GL} is clear, since $|\det|$ is invariant under automorphisms of $N$.
\ref{basic:bound} If $F\subseteq S$ is a $\Z$-basis, then $F$ is unimodular, so $S\setminus F$ is a transversal.

\ref{basic:exchange} Let $T\subseteq S\setminus(B\setminus\{b\})$ be an $n$-element set; we must show $|\det T|\le1$.
If $b\notin T$, then $T\subseteq S\setminus B$ and we are done.
If $b\in T$ and $-b\in T$, then $\det T=0$.
If $b\in T$ and $-b\notin T$, then $T'=(T\setminus\{b\})\cup\{-b\}$ is an $n$-element subset of $S\setminus B$, and $|\det T|=|\det T'|\le1$.
\end{proof}

For a smooth Fano polytope, Lemma~\ref{lem:basic}\ref{basic:bound} applied to a facet gives~\eqref{eq:defect-bound}.
If $Q'\subseteq Q$ are smooth Fano polytopes with $\V(Q')\subseteq\V(Q)$, for instance if $X_{Q}\to X_{Q'}$ is a toric blow-up, then $\ud(Q')\le\ud(Q)$ by Lemma~\ref{lem:basic}\ref{basic:mono}.

\section{Flat ridges}\label{sec:flat}

\begin{definition}\label{def:flat}
A ridge $W$ of $Q$ is \emph{flat} if $|\det(v,W)|\le1$ for every vertex $v$ of $Q$.
\end{definition}

\begin{proposition}\label{prop:flat}
If $W$ is a flat ridge of $Q$, then the lattice vector $m_W$ of~\eqref{eq:mW} is a nonzero element of $\E(P)$.
In particular $P$ satisfies \AP.
\end{proposition}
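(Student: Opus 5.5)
The argument is a direct unwinding of the definitions via Lemma~\ref{lem:coord}. Let $W$ be a flat ridge. It lies in some facet $G=W\cup\{x\}$, and by \eqref{eq:mW} we have $m_W=x^*_G\in M$, well defined up to sign. Membership in $\E(P)$ does not depend on the sign, since $\E(P)$ is centrally symmetric. So we may work with the specific representative $x^*_G$.

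First, $m_W\neq 0$, because $\pair{x}{x^*_G}=1$.

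Next, take any vertex $v\in\V$. Lemma~\ref{lem:coord} gives
\[
|\pair{v}{x^*_G}|=|\det(v,W)|.
\]
Flatness of $W$ bounds the right-hand side by $1$. Hence $|\pair{v}{m_W}|\le 1$ for every $v\in\V$.

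By the description
\[
\E(P)=\{m\in M:\ |\pair{v}{m}|\le1\ \text{for all } v\in\V\},
\]
this says exactly that $m_W\in\E(P)\setminus\{0\}$. Then Definition~\ref{def:AP} yields \AP{} for $P$, with the antipodal pair $\pm m_W$.

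The only point needing care is that the description of $\E(P)$ by vertices alone is legitimate. The inequality $\pair{z}{m}\ge -1$ for all $z\in Q$ is linear in $z$, so it suffices to check it at the vertices of $Q$. This holds for both $m$ and $-m$, and that is what gives $m\in P\cap(-P)$. Apart from this, there is no real obstacle: the statement is immediate once Lemma~\ref{lem:coord} identifies the coordinate function with the ridge determinant.
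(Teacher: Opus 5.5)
Your proposal is correct and matches the paper's proof: take a facet $G=W\cup\{x\}$, note $\pair{x}{x^*_G}=1$ so $m_W\neq 0$, and use Lemma~\ref{lem:coord} to get $|\pair{v}{m_W}|=|\det(v,W)|\le 1$ for every vertex $v$. Your remarks on the sign ambiguity of $m_W$ and on why the vertex description of $\E(P)$ suffices are correct; the paper takes both for granted.
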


\begin{proof}
Let $G=W\cup\{x\}$ be a facet, so that $m_W=x^*_G$.
By Lemma~\ref{lem:coord}, $W$ is flat if and only if every vertex has $x$-coordinate in $\{-1,0,1\}$ with respect to $G$.
Then $\pair{x}{m_W}=1$, so $m_W\ne0$, and $|\pair{v}{m_W}|=|\det(v,W)|\le1$ for all $v\in\V$ by Lemma~\ref{lem:coord}.
\end{proof}

\begin{remark}\label{rem:flat-meaning}
Let $G=W\cup\{x\}$ and $G'=W\cup\{x'\}$ be the two facets containing $W$.
The ridge $W$ is dual to the edge of $P$ joining the vertices $-n_G$ and $-n_{G'}$.
The difference $n_G-n_{G'}$ vanishes on $W$ and takes the value $1-h_{G'}(x)=2-s$ at $x$ (Lemma~\ref{lem:wall}), so $n_G-n_{G'}=(2-s)\,m_W$.
Hence $m_W$ is the primitive direction vector of this edge, and $2-s\ge1$ is its lattice length.
Thus \emph{$Q$ has a flat ridge if and only if $P$ has an edge whose primitive direction vector $e$ satisfies $\pm e\in P$.}

\end{remark}

If $Q$ is unimodular, every ridge is flat.
The next lemma extends this observation and is the starting point of all our arguments.

\begin{lemma}\label{lem:good-values}
Let $B$ be a transversal of $\V$ and let $W$ be a ridge with $W\cap B=\emptyset$.
Then $|\det(v,W)|\le1$ for every $v\in\V\setminus B$.
Consequently $W$ is flat as soon as $|\det(u,W)|\le1$ for every $u\in B$.
\end{lemma}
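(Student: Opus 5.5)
The plan is to use the fact that $W$ together with any good vertex forms an $n$-element subset of the unimodular set $\V\setminus B$.

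Let $v\in\V\setminus B$. Since $W\cap B=\emptyset$, we have $W\subseteq\V\setminus B$. The set $T=W\cup\{v\}$ is therefore a subset of $\V\setminus B$, and there are two cases.

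\emph{Case $v\notin W$.} Then $T$ has exactly $n$ elements and is an $n$-element subset of $\V\setminus B$. Since $B$ is a transversal, $\V\setminus B$ is unimodular, so $|\det T|\le1$. Up to sign, $\det T$ is $\det(v,W)$, and the absolute value is independent of the ordering. This gives $|\det(v,W)|\le1$.

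\emph{Case $v\in W$.} The determinant $\det(v,W)$ has a repeated column, so it vanishes.

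It may also be worth noting a point-free reformulation via Lemma~\ref{lem:coord}: choose a facet $G=W\cup\{x\}$, which exists because $W$ is a ridge. Then $|\det(v,W)|$ is the absolute value of the $x$-coordinate of $v$ in the basis $G$. The unimodularity argument above shows directly that every good vertex has $x$-coordinate in $\{-1,0,1\}$. This viewpoint is not needed for the proof, but it matches how the lemma will be used together with Proposition~\ref{prop:flat}.

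For the consequence, observe that $\V=(\V\setminus B)\cup B$. If in addition $|\det(u,W)|\le1$ for every $u\in B$, then every vertex satisfies the bound, and $W$ is flat by Definition~\ref{def:flat}.

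There is no real obstacle here. The only points requiring care are that the determinant in the unimodularity definition ranges over arbitrary $n$-element subsets, including dependent ones where it is $0$, and that the case $v\in W$ must be treated separately because $W\cup\{v\}$ then has fewer than $n$ elements.
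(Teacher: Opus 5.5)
Your proof is correct and is essentially the paper's argument. You split into the cases $v\in W$ (where the determinant vanishes) and $v\notin W$ (where $\{v\}\cup W$ is an $n$-element subset of the unimodular set $\V\setminus B$), and the consequence then follows immediately from Definition~\ref{def:flat}.
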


\begin{proof}
Let $v\in\V\setminus B$.
If $v\in W$, then $\det(v,W)=0$.
Otherwise $\{v\}\cup W$ is an $n$-element subset of the unimodular set $\V\setminus B$.
\end{proof}

\section{Unimodular defect at most two: proof of Theorem~\ref{thm:A}}\label{sec:main}

Let $B$ be a transversal of $\V$ with $|B|\le2$.
There are four cases: $B$ has at most one element; $B=\{x,-x\}$; $B$ consists of two non-adjacent vertices which are not antipodal; $B$ consists of two adjacent vertices.
Proposition~\ref{prop:one} treats the first two cases, Proposition~\ref{prop:nonadjacent} the third, and Proposition~\ref{prop:adjacent} the fourth.
The resulting flat ridges are listed in Table~\ref{tab:certificates}.

\subsection{At most one bad vertex up to sign}

\begin{proposition}\label{prop:one}
Let $B$ be a transversal of $\V$.
\begin{enumerate}[label=\textup{(\alph*)}]
\item If $B=\emptyset$, then every ridge of $Q$ is flat.
\item If $B=\{x\}$ or $B=\{x,-x\}$, then $G\setminus\{x\}$ is a flat ridge for every facet $G$ containing $x$.
\end{enumerate}
\end{proposition}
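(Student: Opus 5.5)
Both parts will follow from Lemma~\ref{lem:good-values}, which says that a ridge $W$ disjoint from the transversal $B$ is flat as soon as every bad vertex $u\in B$ satisfies $|\det(u,W)|\le1$. So in each case I will pick a ridge avoiding $B$ and check only the bad vertices.

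For (a), take $B=\emptyset$. Then every ridge $W$ is disjoint from $B$, and there are no bad vertices to check. Lemma~\ref{lem:good-values} gives directly that $|\det(v,W)|\le1$ for every $v\in\V$, so every ridge is flat. This is the observation already made before Lemma~\ref{lem:good-values}.

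For (b), let $G$ be a facet containing $x$ and put $W=G\setminus\{x\}$.
\begin{itemize}
\item First I check that $W\cap B=\emptyset$. Clearly $x\notin W$. If $B=\{x,-x\}$, I also need $-x\notin W$. This holds because $G$ is a $\Z$-basis, so it cannot contain both $x$ and $-x$; hence $-x\notin G\supseteq W$.
\item Next I check the bad vertices. Since $G=W\cup\{x\}$ is a facet, $|\det(x,W)|=|\det G|=1$. If $-x\in B$ as well, then $|\det(-x,W)|=|\det(x,W)|=1$.
\end{itemize}
With both conditions verified, Lemma~\ref{lem:good-values} shows that $W$ is flat. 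Equivalently, by Lemma~\ref{lem:coord}, every vertex has $x$-coordinate in $\{-1,0,1\}$ with respect to the basis $G$.

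An alternative route for the case $B=\{x,-x\}$ is Lemma~\ref{lem:basic}\ref{basic:exchange}, which shows that $\{x\}$ alone is already a transversal; this reduces that case to $B=\{x\}$. There is no real obstacle here; the only point needing care is confirming that $-x\notin W$.
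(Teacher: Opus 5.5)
Your main argument is correct and is exactly the paper's proof: apply Lemma~\ref{lem:good-values} to $W=G\setminus\{x\}$, using that $-x\notin G$ because $G$ is linearly independent, and that $|\det(\pm x,W)|=|\det G|=1$.

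Drop the closing ``alternative route'', however, because it is false. Lemma~\ref{lem:basic}\ref{basic:exchange} requires $-b\in\V\setminus B$, and this fails for both choices $b=\pm x$ when $B=\{x,-x\}$. Moreover, in Example~\ref{ex:nonadjacent} the set $\{x,-x\}$ is a transversal while no single vertex is, so the case $B=\{x,-x\}$ cannot be reduced to $B=\{x\}$. Your direct argument handles that case without needing the reduction.
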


\begin{proof}
(a) is Lemma~\ref{lem:good-values} with $B=\emptyset$.
(b) Let $G$ be a facet containing $x$ and $W=G\setminus\{x\}$.
The vertex $-x$ is not in $G$, because $G$ is linearly independent; hence $W\cap B=\emptyset$.
Moreover $|\det(x,W)|=|\det G|=1$ and $|\det(-x,W)|=1$.
By Lemma~\ref{lem:good-values}, $W$ is flat.
\end{proof}

\subsection{Two non-adjacent bad vertices}

\begin{lemma}\label{lem:pair}
Let $x$ and $u$ be non-adjacent vertices of $Q$.
Then either $u=-x$, or $y:=x+u$ is a vertex of $Q$ which is adjacent to both $x$ and $u$.
\end{lemma}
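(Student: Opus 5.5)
The plan is to locate the lattice point $z:=x+u$ in the fan of $Q$ and read off its coordinates using \eqref{eq:cone-coeff}, taking care over which facet we read them in. If $z=0$, then $u=-x$ and we are done, so from now on assume $z\neq0$.

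\emph{Step 1: any facet whose cone contains $z$ contains $y=z$ and exactly one of $x,u$.} Let $G$ be a facet with $z\in\cone(G)$; such a facet exists because the facet cones cover $N_\R$. By \eqref{eq:cone-coeff} we can write $z=\sum_g c_g g$ with $c_g\in\Z_{\ge0}$ and $\sum_g c_g=h_G(z)=h_G(x)+h_G(u)$. Since $z\ne0$, this sum is at least $1$.

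Both $x$ and $u$ cannot lie in $G$. Indeed, $\conv(G)$ is a simplex, so any two of its vertices span an edge of $\conv(G)$ and hence an edge of $Q$, whereas $x$ and $u$ are non-adjacent.

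Recall that heights of vertices are at most $1$, with equality exactly on $G$, and are $\le0$ off $G$. It follows that $h_G(x)+h_G(u)\le1$, so the sum equals $1$. This forces exactly one of $x,u$ to lie in $G$, with the other at height $0$. It also forces $z$ to be a single basis vector $g\in G$. Thus $y:=z$ is a vertex of $Q$ lying in $G$. Moreover $y\neq x$ and $y\ne u$, since neither $u$ nor $x$ is zero.

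\emph{Step 2: adjacency of $y$ to both $x$ and $u$.} Step 1 applies to every facet containing $y$, because each such facet $G$ has $z=y\in\cone(G)$. So each facet containing $y$ contains exactly one of $x,u$, and then $y$ is adjacent to that one, again because facets are simplices. It remains to show that both alternatives actually occur.

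For this, use that the vertex $\{y\}$ is a face of $Q$ and hence equals the intersection of all facets containing it. Since $x\ne y$, some facet $G_1\ni y$ does not contain $x$, and by Step 1 it then contains $u$. This gives the edge $[y,u]$. Symmetrically, some facet $G_2\ni y$ omits $u$ and so contains $x$, giving the edge $[y,x]$.

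The main point needing care is Step 2. A single facet only certifies adjacency of $y$ to one of the two vertices, so one must vary the facet around $y$ and invoke the face-lattice fact that a vertex is cut out by the facets through it.
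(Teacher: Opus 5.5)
Your proof is correct and takes essentially the same route as the paper: bound the cone coefficients of $x+u$ by $h_G(x)+h_G(u)\le 1$ to get $u=-x$ or $x+u$ a vertex, then show every facet through $y$ contains $x$ or $u$, and use that the facets through a vertex intersect only in that vertex. The only cosmetic difference is that you re-run Step~1 for each facet through $y$, obtaining ``exactly one'', whereas the paper writes the identity $h_{G'}(x)+h_{G'}(u)=h_{G'}(y)=1$ directly.
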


This is the smooth case of \cite[Proposition~3.1]{Nill05}; see also the discussion before Proposition~\ref{prop:indep}.
We include the short proof in the smooth case.

\begin{proof}
Choose a facet $G$ with $x+u\in\cone(G)$ and write $x+u=\sum_{g\in G}c_g\,g$ as in~\eqref{eq:cone-coeff}, so that $c_g\in\Z_{\ge0}$ and $\sum_{g}c_g=h_G(x)+h_G(u)$.
Since $x$ and $u$ are not adjacent, they do not both belong to $G$.
Thus, one of the two heights is at most $0$ and the other at most $1$, so $\sum_g c_g\le1$.
Therefore either all $c_g$ vanish and $x+u=0$, or exactly one $c_g$ equals $1$ and $x+u=g$ is a vertex.

Assume $u\neq-x$, so that $y=x+u$ is a vertex; clearly $y\ne x$ and $y\ne u$.
Let $G'$ be any facet containing $y$.
Then $h_{G'}(x)+h_{G'}(u)=h_{G'}(y)=1$, and both summands are integers which are at most $1$; hence one of them equals $1$.
This means: \emph{every facet containing $y$ contains $x$ or $u$.}
If no facet contained both $y$ and $x$, every facet containing $y$ would contain $u$, contradicting the fact that the intersection of all facets containing the vertex $y$ is $\{y\}$ \cite[Theorem~2.7]{Ziegler}.
So $y$ is adjacent to $x$, and by symmetry to $u$.
\end{proof}

\begin{proposition}\label{prop:nonadjacent}
Let $B=\{x,u\}$ be a transversal of $\V$ such that $x$ and $u$ are not adjacent and $u\neq-x$.
Then $y=x+u$ is a good vertex adjacent to both $x$ and $u$, and $G\setminus\{x\}$ is a flat ridge for every facet $G$ containing $x$ and $y$.
\end{proposition}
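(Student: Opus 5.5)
By Lemma~\ref{lem:pair}, $y=x+u$ is a vertex adjacent to both $x$ and $u$. Since $B=\{x,u\}$ and $y\ne x,u$, the vertex $y$ is good. As $x$ and $y$ are adjacent, some facet $G$ contains both; fix any such $G$ and put $W=G\setminus\{x\}$. Because $x,u$ are non-adjacent, $u\notin G$. Hence $W\cap B=\emptyset$, and Lemma~\ref{lem:good-values} reduces flatness of $W$ to two checks: $|\det(x,W)|\le1$ and $|\det(u,W)|\le1$.

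The first check is immediate: $|\det(x,W)|=|\det G|=1$. For the second, use linearity of the determinant in its first argument together with $u=y-x$:
\[
\det(u,W)=\det(y,W)-\det(x,W).
\]
Since $y\in W$, the term $\det(y,W)$ vanishes. Therefore $|\det(u,W)|=|\det(x,W)|=1$.

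In coordinates this is transparent. By Lemma~\ref{lem:coord}, the $x$-coordinate of $u$ with respect to the basis $G$ is the $x$-coordinate of $y-x$, which is $0-1=-1$. So every vertex has $x$-coordinate in $\{-1,0,1\}$, and $W$ is flat.

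There is no real obstacle beyond Lemma~\ref{lem:pair}, which supplies both that $y$ is a vertex and that $y$ is adjacent to $x$. One point needs checking: a facet containing both $x$ and $y$ exists. This holds because adjacency means $\conv\{x,y\}$ is an edge of $Q$, and every edge lies in some facet.
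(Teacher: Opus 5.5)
Your proposal is correct and follows the paper's proof essentially step for step. You use Lemma~\ref{lem:pair} to get $y$ and its adjacency to $x$, observe $u\notin G$ so that $W\cap B=\emptyset$, compute $\det(u,W)=\det(y,W)-\det(x,W)=-\det(x,W)$, and conclude with Lemma~\ref{lem:good-values}. Your extra remark that an edge lies in some facet is a harmless addition that the paper leaves implicit.
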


\begin{proof}
By Lemma~\ref{lem:pair}, $y$ is a vertex adjacent to $x$; it is good because $y\ne x$ and $y\neq u$.
Let $G$ be a facet containing $x$ and $y$, and put $W=G\setminus\{x\}$.
Since $u$ is not adjacent to $x$, we have $u\notin G$, so $W\cap B=\emptyset$.
We have $|\det(x,W)|=1$, and since $y\in W$,
\[
\det(u,W)=\det(y,W)-\det(x,W)=-\det(x,W),
\]
so $|\det(u,W)|=1$.
Thus, by Lemma~\ref{lem:good-values}, $W$ is flat.
\end{proof}

\subsection{Two adjacent bad vertices}

\begin{proposition}\label{prop:adjacent}
Let $B=\{u_0,u_1\}$ be a transversal of $\V$ consisting of two adjacent vertices.
Let $G$ be any facet containing $u_0$ and $u_1$, and put $R=G\setminus\{u_0,u_1\}$.
Let
\[
G_1=R\cup\{u_1,y_1\}\qquad\text{and}\qquad G_0=R\cup\{u_0,y_0\}
\]
be the facets neighbouring $G$ across the ridges $R\cup\{u_1\}$ and $R\cup\{u_0\}$, respectively.
Then $y_0$ and $y_1$ are good, and at least one of the ridges $R\cup\{y_1\}$ and $R\cup\{y_0\}$ is flat.
\end{proposition}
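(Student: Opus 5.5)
The plan is to reduce flatness of each candidate ridge to a single integer coefficient, and then bound the product of the two coefficients by a determinant taken entirely among good vertices.

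\emph{Goodness of $y_0,y_1$.} Since $y_1\notin G$, we have $y_1\ne u_0,u_1$, so $y_1$ is good. The same argument shows $y_0$ is good. Consequently both ridges $W_1=R\cup\{y_1\}$ and $W_0=R\cup\{y_0\}$ are disjoint from $B$. By Lemma~\ref{lem:good-values} it therefore suffices to check $|\det(u_0,W_i)|\le1$ and $|\det(u_1,W_i)|\le1$.

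\emph{Reduction to one coefficient per ridge.} For $W_1$ one of these bounds is automatic: $W_1\cup\{u_1\}=G_1$ is a facet, so $|\det(u_1,W_1)|=1$. Likewise $|\det(u_0,W_0)|=1$ because $G_0$ is a facet.

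For the remaining bound, apply Lemma~\ref{lem:wall} to the ridge $R\cup\{u_1\}$, whose two facets are $G$ and $G_1$. In the basis $G$ this gives
\[
y_1=-u_0+a\,u_1+\sum_{r\in R}a_r r,
\]
and symmetrically, for the ridge $R\cup\{u_0\}$,
\[
y_0=b\,u_0-u_1+\sum_{r\in R}b_r r,
\]
with all coefficients integers.

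Expanding by multilinearity, the terms involving $u_0$ and the elements of $R$ vanish, so
\[
\det(u_0,R,y_1)=a\det(u_0,R,u_1)=\pm a .
\]
Hence $W_1$ is flat if and only if $|a|\le1$. In the same way, $W_0$ is flat if and only if $|b|\le1$.

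\emph{Bounding $ab$.} Consider the $n$ vectors $y_0,y_1$ together with the $n-2$ elements of $R$. All of them are good, so they form an $n$-element subset of the unimodular set $\V\setminus B$, and $|\det(y_0,y_1,R)|\le1$.

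Modulo the span of $R$, only the $u_0,u_1$-coordinates of $y_0$ and $y_1$ contribute to this determinant. It therefore equals, up to sign,
\[
\det\begin{pmatrix} b & -1\\ -1 & a\end{pmatrix}=ab-1 .
\]
This gives $|ab-1|\le1$, that is, $0\le ab\le2$. If both $|a|\ge2$ and $|b|\ge2$, then $|ab|\ge4$, a contradiction. So at least one of $|a|\le1$ and $|b|\le1$ holds, and the corresponding ridge $W_1$ or $W_0$ is flat.

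The case $y_0=y_1$ needs no separate treatment: then the determinant is $0$, which is still allowed, and it forces $ab=1$.

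The only real step is choosing the right test determinant $\det(y_0,y_1,R)$, which is built entirely from good vertices. Everything else is a direct application of Lemma~\ref{lem:wall} and Lemma~\ref{lem:good-values}.
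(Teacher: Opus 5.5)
Your proof is correct and follows essentially the same route as the paper: Lemma~\ref{lem:wall} gives the coordinates of $y_0,y_1$, the test determinant $\det(y_0,y_1,R)=\pm(1-ab)$ over good vertices forces $ab\in\{0,1,2\}$, and Lemma~\ref{lem:good-values} finishes the argument. The only differences are cosmetic: your $a$ and $b$ are swapped relative to the paper, and you handle $y_0=y_1$ by noting that the determinant vanishes, whereas the paper compares coefficients to get $a=b=-1$; both are valid.
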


\begin{proof}
The vertices $y_0,y_1$ do not belong to $G$, whereas $B\subseteq G$; hence $y_0$ and $y_1$ are good.
All elements of $R$ are good as well.

\emph{Step 1: coordinates of $y_1$ and $y_0$.}
The facet $G_1$ is obtained from $G$ by replacing $u_0$ with $y_1$, and $G_0$ by replacing $u_1$ with $y_0$.
Since $R$ is part of the lattice basis $G$, its integer span satisfies $\Z R=\spn_\R(R)\cap N$.
By Lemma~\ref{lem:wall} there are integers $a,b$ and vectors $r_0,r_1\in\spn_\R(R)\cap N$ such that
\begin{equation}\label{eq:y1y0}
y_1=-u_0+b\,u_1+r_1,\qquad y_0=a\,u_0-u_1+r_0 .
\end{equation}

\emph{Step 2: $|a|\le1$ or $|b|\le1$.}
If $y_1=y_0$, comparing the coefficients of $u_0$ and $u_1$ in~\eqref{eq:y1y0} gives $a=-1$ and $b=-1$.
Assume $y_1\neq y_0$.
Then $\{y_1,y_0\}\cup R$ is an $n$-element set of good vertices, so its determinant lies in $\{0,\pm1\}$.
On the other hand, by~\eqref{eq:y1y0} and multilinearity,
\[
\det(y_1,y_0,R)=\det\begin{pmatrix}-1&b\\ a&-1\end{pmatrix}\cdot\det(u_0,u_1,R)=\pm(1-ab).
\]
Hence $1-ab\in\{-1,0,1\}$, that is, $ab\in\{0,1,2\}$.
For integers $a,b$ this forces $|a|\le1$ or $|b|\le1$.

\emph{Step 3: the flat ridge.}
Suppose $|b|\le1$ and put $W=R\cup\{y_1\}$.
This is a ridge, since it is contained in the facet $G_1$, and all its elements are good.
By~\eqref{eq:y1y0},
\[
\det(u_0,R,y_1)=b\,\det(u_0,R,u_1)=\pm b,\qquad \det(u_1,R,y_1)=-\det(u_1,R,u_0)=\pm1 .
\]
So $|\det(u_0,W)|\le1$ and $|\det(u_1,W)|\le1$, and $W$ is flat by Lemma~\ref{lem:good-values}.
If $|a|\le1$, the same argument with $W=R\cup\{y_0\}\subseteq G_0$ gives $|\det(u_1,W)|=|a|\le1$ and $|\det(u_0,W)|=1$.
\end{proof}

\begin{figure}[ht]
\centering
\begin{tikzpicture}[scale=1.25]
  \fill[black!10] (-1,-1) rectangle (1,1);
  \fill[blue!10] (0,0) -- (3.3,0) -- (3.3,2.4) -- (0,2.4) -- cycle;
  \foreach \x in {-3,-2,-1,0,1,2,3}
    \foreach \y in {-2,-1,0,1,2}
      \fill[black!45] (\x,\y) circle (1pt);
  \draw[dashed,black!55] (-1,-2.4) -- (-1,2.4);
  \draw[dashed,black!55] (-3.4,-1) -- (3.3,-1);
  \draw[->,thick] (0,0) -- (1,0);
  \draw[->,thick] (0,0) -- (0,1);
  \draw[->,thick,red!70!black] (0,0) -- (-1,1);
  \draw[->,thick,red!70!black] (0,0) -- (2,-1);
  \node[anchor=south west] at (1.02,0.02) {$\bar u_0=(1,0)$};
  \node[anchor=south west] at (0.02,1.02) {$\bar u_1=(0,1)$};
  \node[anchor=south east,red!70!black] at (-1.05,1.02) {$\bar y_1=(-1,b)$};
  \node[anchor=north west,red!70!black] at (2.02,-1.05) {$\bar y_0=(a,-1)$};
  \node[black!70] at (2.5,1.5) {$\cone(\bar u_0,\bar u_1)$};
  \node[black!65,anchor=north] at (-1,-2.45) {$\alpha=-1$};
  \node[black!65,anchor=east] at (-3.45,-1) {$\beta=-1$};
  \node[black!65] at (-0.5,-0.55) {$[-1,1]^2$};
\end{tikzpicture}
\caption{The situation of Proposition~\ref{prop:adjacent} after projecting along $R$, drawn for $b=1$ and $a=2$. The ray $\bar y_1$ lies on the line $\alpha=-1$ and the ray $\bar y_0$ on the line $\beta=-1$. Since $1-ab\in\{-1,0,1\}$, at least one of them lies in the square $[-1,1]^2$; here it is $\bar y_1$, and $R\cup\{y_1\}$ is a flat ridge.}
\label{fig:planar}
\end{figure}

Projecting along $\Z R$ gives $N/\Z R\cong\Z^2$, with $\bar u_0=(1,0)$, $\bar u_1=(0,1)$, $\bar y_1=(-1,b)$ and $\bar y_0=(a,-1)$ (see Figure~\ref{fig:planar}).

\begin{table}[ht]
\centering
\renewcommand{\arraystretch}{1.25}
\begin{tabular}{@{}lll@{}}
\toprule
transversal $B$ & facet & flat ridge $W$\\
\midrule
$\emptyset$ & any $G$ & any ridge\\
$\{x\}$ & any $G\ni x$ & $G\setminus\{x\}$\\
$\{x,-x\}$ & any $G\ni x$ & $G\setminus\{x\}$\\
$\{x,u\}$, non-adjacent, $u\ne-x$ & any $G\supseteq\{x,\,x+u\}$ & $G\setminus\{x\}$\\
$\{u_0,u_1\}$, adjacent, $|b|\le1$ & $G_1=R\cup\{u_1,y_1\}$ & $R\cup\{y_1\}$\\
$\{u_0,u_1\}$, adjacent, $|a|\le1$ & $G_0=R\cup\{u_0,y_0\}$ & $R\cup\{y_0\}$\\
\bottomrule
\end{tabular}
\medskip
\caption{The flat ridges of Theorem~\ref{thm:A}. In the last two rows, $G=R\cup\{u_0,u_1\}$ is any facet containing $B$, and $y_1=-u_0+b\,u_1+r_1$, $y_0=a\,u_0-u_1+r_0$ are the new vertices of the two neighbouring facets, as in Proposition~\ref{prop:adjacent}.}
\label{tab:certificates}
\end{table}

\subsection{Proof of Theorem~\ref{thm:A}}

\begin{proof}[Proof of Theorem~\ref{thm:A}]
Let $B$ be a transversal of $\V$ with $|B|\le2$.
If $B=\emptyset$, $B=\{x\}$ or $B=\{x,-x\}$, Proposition~\ref{prop:one} gives a flat ridge.
If $B=\{x,u\}$ with $x,u$ non-adjacent and $u\neq-x$, Proposition~\ref{prop:nonadjacent} gives a flat ridge.
If $B$ consists of two adjacent vertices, Proposition~\ref{prop:adjacent} gives a flat ridge.
By Proposition~\ref{prop:flat}, $P$ satisfies \AP.
\end{proof}

\section{Independent sets of vertices and independent transversals}\label{sec:indep}

\subsection{Independent sets of vertices}

We recall the following consequences of Nill's results on simplicial reflexive polytopes \cite{Nill05}.
Part~\ref{B:pair} is the smooth case of \cite[Proposition~3.1]{Nill05}.
Parts~\ref{B:anti} and~\ref{B:hex} follow from \cite[Lemma~5.11]{Nill05}, using the fact that the only boundary lattice points of a smooth Fano polytope are its vertices; part~\ref{B:alpha} follows from part~\ref{B:hex}.
The numbering refers to the arXiv version \texttt{math/0405448}.
We include elementary proofs in the smooth case.

\begin{proposition}[Nill]\label{prop:indep}
Let $Q$ be a smooth Fano polytope.
\begin{enumerate}[label=\textup{(\alph*)}]
\item\label{B:pair} If $x$ and $u$ are non-adjacent vertices, then either $u=-x$, or $x+u$ is a vertex of $Q$ which is adjacent to both $x$ and $u$.
\item\label{B:anti} If $u$ and $-u$ are both vertices, then every other vertex is adjacent to $u$ or to $-u$.
\item\label{B:hex} If $v_1,v_2,v_3$ are pairwise non-adjacent vertices, then $v_1+v_2+v_3=0$, and $\pm v_1,\pm v_2,\pm v_3$ are six distinct vertices of $Q$.
\item\label{B:alpha} $Q$ has at most three pairwise non-adjacent vertices.
\end{enumerate}
\end{proposition}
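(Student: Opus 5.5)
Part~\ref{B:pair} is Lemma~\ref{lem:pair}, proved above, so nothing remains to be done there. The other parts rest on the same height argument: for a facet $G$ and vertices $v_i$, the lattice point $\sum v_i$ has height $\sum h_G(v_i)$, and by~\eqref{eq:cone-coeff} a lattice point of $\cone(G)$ of height $h$ is a nonnegative integer combination of $G$ with coefficient sum $h$.

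For \ref{B:anti}, let $v\ne\pm u$ be a vertex. Suppose $v$ is adjacent to neither $u$ nor $-u$. Applying \ref{B:pair} to the pair $(v,u)$ shows that $y=v+u$ is a vertex, since $u\neq -v$. Applying it to $(v,-u)$ shows that $y'=v-u$ is a vertex. Choose a facet $G$ containing $v$. Because $v$ is adjacent to neither $\pm u$, we have $h_G(\pm u)\le 0$, and since $h_G(u)+h_G(-u)=0$ this forces $h_G(u)=0$. Then $h_G(y)=h_G(v)+h_G(u)=1$, so $y\in G$ for every facet $G\ni v$. Hence every facet containing $v$ contains $y\ne v$. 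This contradicts the fact, used in Lemma~\ref{lem:pair}, that the facets through $v$ intersect only in $\{v\}$.

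For \ref{B:hex}, apply \ref{B:pair} to each pair. If some $v_j=-v_i$, then \ref{B:anti} says the third vertex is adjacent to $v_i$ or to $v_j$, contradicting independence. So $v_{ij}:=v_i+v_j$ is a vertex for each pair $i<j$, and it is adjacent to both $v_i$ and $v_j$. Now choose a facet $G$ with $s=v_1+v_2+v_3\in\cone(G)$. Since the $v_i$ are pairwise non-adjacent, at most one of them lies in $G$, so $h_G(s)\le 1$. Thus either $s=0$, or $s=g$ is a vertex of $G$.

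The main obstacle is ruling out $s=g\neq 0$. I would exploit the heights on facets through $g$. Take any facet $G'\ni g$; then $h_{G'}(v_1)+h_{G'}(v_2)+h_{G'}(v_3)=1$, and at most one $v_i$ lies in $G'$. This forces exactly one $v_i\in G'$ and the other two to have height $0$. For the two others, say $j,k$, we get $h_{G'}(v_{jk})=0$ and $h_{G'}(v_{ik})=1$, so $v_{ik}\in G'$ and $v_{ij}\in G'$ as well. Therefore $G'$ contains $v_i$, $v_i+v_j$, $v_i+v_k$ and $g=v_i+v_j+v_k$. These four lie in a lattice basis, yet $g=(v_i+v_j)+(v_i+v_k)-v_i$ is a linear relation among them. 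This gives a contradiction unless two of the four coincide, which is impossible since $v_j,v_k\neq0$ and $v_j+v_k\neq0$. Hence $s=0$.

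Distinctness of $\pm v_1,\pm v_2,\pm v_3$ follows because $-v_1=v_2+v_3=v_{23}$ is a vertex, and similarly for the other two. Any coincidence such as $v_i=-v_j$ was excluded above, and $v_i=-v_i$ is impossible.

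For \ref{B:alpha}, suppose $v_1,\dots,v_4$ are pairwise non-adjacent. Applying \ref{B:hex} to $\{v_1,v_2,v_3\}$ and to $\{v_1,v_2,v_4\}$ gives $v_3=-v_1-v_2=v_4$, a contradiction.
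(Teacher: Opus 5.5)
Your proof is correct. It uses the same tools as the paper (heights, \eqref{eq:cone-coeff}, and a linear relation inside a facet), but your route through (b) and (c) is somewhat different.

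In (c), the paper first shows (Step~1) that $y_{12}$ is distinct from $v_3$ and not adjacent to it. This lets it apply Lemma~\ref{lem:pair} to $(y_{12},v_3)$ to get $t=0$ or $t\in\V$. It then applies Lemma~\ref{lem:pair} a second time, to $(y_{23},v_1)$, to obtain a facet containing both $t$ and $v_1$. You bypass both steps. You run the cone-coefficient argument directly on the three-term sum: any facet contains at most one $v_i$, so $h_G(s)\le 1$. Then, on an arbitrary facet $G'$ through $g=s$, the equation $\sum_i h_{G'}(v_i)=1$ by itself forces exactly one $v_i\in G'$, with the other two at height $0$. You arrive at the same relation $g=v_{ij}+v_{ik}-v_i$ inside a facet, so your version of (c) is genuinely shorter. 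In the distinctness check you should also list $v_j\neq v_k$, which is immediate.

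In (b), the paper puts $y=x+u$ and $y'=x-u$ into a single facet and uses $y+y'=2x$. You instead show $h_G(u)=0$ for every facet through $v$, so $y$ lies in all of them, contradicting the fact that the facets through a vertex meet only in that vertex. Your version uses only $y$; your application of (a) to $(v,-u)$ is never used. The price is the face-lattice fact, whereas the paper's version needs only the linear independence of one facet.
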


\begin{proof}[Proof of Proposition~\ref{prop:indep}]
Part~\ref{B:pair} is Lemma~\ref{lem:pair}.

\ref{B:anti} Let $x\neq\pm u$ be a vertex, and suppose that $x$ is adjacent to neither $u$ nor $-u$.
By Lemma~\ref{lem:pair} applied to $\{x,u\}$ and $\{x,-u\}$, the vectors $y=x+u$ and $y'=x-u$ are vertices.
Let $G$ be a facet containing $x$.
Then $h_G(y)+h_G(y')=2h_G(x)=2$, so $y,y'\in G$.
The three vertices $x,y,y'$ of $G$ are distinct (because $u\neq0$), and $y+y'-2x=0$.
This contradicts the linear independence of $G$.

\ref{B:hex} By~\ref{B:anti}, no two of the $v_i$ are antipodal.
By Lemma~\ref{lem:pair}, $y_{12}=v_1+v_2$, $y_{13}=v_1+v_3$ and $y_{23}=v_2+v_3$ are vertices, and $y_{ij}$ is adjacent to $v_i$ and to $v_j$.
For a facet $G$ we abbreviate $h_i=h_G(v_i)$; then $h_G(y_{ij})=h_i+h_j$, and all heights of vertices are at most $1$.

\emph{Step 1: $y_{12}$ is different from $v_3$ and not adjacent to $v_3$.}
First, $y_{12}\ne v_3$, because $y_{12}$ is adjacent to $v_1$ and $v_3$ is not.
Suppose a facet $G$ contains $y_{12}$ and $v_3$.
Then $h_3=1$ and $h_1+h_2=1$.
But $h_1+h_3=h_G(y_{13})\le1$ gives $h_1\le0$, and likewise $h_2\le0$; so $h_1+h_2\le0$, a contradiction.
By symmetry, $y_{23}$ is different from $v_1$ and not adjacent to $v_1$.

\emph{Step 2: $t:=v_1+v_2+v_3$ is zero.}
Since $t=y_{12}+v_3$, Step~1 and Lemma~\ref{lem:pair} show that either $t=0$ or $t$ is a vertex.
Suppose $t$ is a vertex.
Since also $t=y_{23}+v_1$ with $y_{23}$ and $v_1$ non-adjacent, Lemma~\ref{lem:pair} shows that $t$ is adjacent to $v_1$.
Let $G$ be a facet containing $t$ and $v_1$.
Then $h_1=1$ and $h_1+h_2+h_3=h_G(t)=1$, so $h_2+h_3=0$.
Moreover $1+h_2=h_G(y_{12})\le1$ and $1+h_3=h_G(y_{13})\le1$, so $h_2\le0$ and $h_3\le0$.
Hence $h_2=h_3=0$, so $h_G(y_{12})=h_G(y_{13})=1$, that is, $y_{12},y_{13}\in G$.
Now $v_1$, $t$, $y_{12}$, $y_{13}$ are elements of $G$ satisfying $y_{12}+y_{13}=2v_1+v_2+v_3=v_1+t$.
They are pairwise distinct: $y_{12}\ne y_{13}$ since $v_2\neq v_3$; $y_{1j}\neq v_1$ since $v_j\ne0$; $y_{12}\neq t$ and $y_{13}\neq t$ since $v_3\ne0$ and $v_2\neq0$; and $t\ne v_1$ since $v_2\neq-v_3$.
This is a non-trivial linear relation among elements of the basis $G$, a contradiction.
Hence $t=0$.

\emph{Step 3.}
Since $t=0$, the vectors $-v_3=y_{12}$, $-v_2=y_{13}$ and $-v_1=y_{23}$ are vertices, and the six vertices $\pm v_i$ are distinct because the $v_i$ are distinct and no two of them are antipodal.

\ref{B:alpha} If $v_1,v_2,v_3,v_4$ were four pairwise non-adjacent vertices, then~\ref{B:hex} applied to $\{v_1,v_2,v_3\}$ and to $\{v_1,v_2,v_4\}$ would give $v_3=-v_1-v_2=v_4$.
\end{proof}

\subsection{Independent transversals: proof of Theorem~\ref{thm:B}}

\begin{proof}[Proof of Theorem~\ref{thm:B}]
\ref{C:size}
By Proposition~\ref{prop:indep}\ref{B:alpha}, every independent transversal $B$ has $|B|\le3$.
Suppose $|B|=3$, say $B=\{v_1,v_2,v_3\}$.
By Proposition~\ref{prop:indep}\ref{B:hex} the vectors $-v_1,-v_2,-v_3$ are vertices of $Q$, and none of them belongs to $B$.
So $-b\in\V\setminus B$ for every $b\in B$, and $\V$ is unimodular by Lemma~\ref{lem:basic}\ref{basic:exchange}.
This proves all statements of~\ref{C:size}.

\ref{C:defect}
By~\ref{C:size}, either $Q$ has an independent transversal with at most two elements, or $Q$ is unimodular and $\emptyset$ is an independent transversal.
Hence $\ud(Q)\le\udind(Q)\le2$.
If $\ud(Q)\le1$, a minimal transversal has at most one element and is independent, so $\udind(Q)=\ud(Q)$.
If $\ud(Q)=2$, then $2=\ud(Q)\le\udind(Q)\le2$.

\ref{C:AP}
A minimal independent transversal has one of the forms $\emptyset$, $\{x\}$, $\{x,-x\}$, or $\{x,u\}$ with $x,u$ non-adjacent and $u\ne-x$; so Propositions~\ref{prop:one} and~\ref{prop:nonadjacent} give a flat ridge, and Proposition~\ref{prop:flat} gives \AP.
(Proposition~\ref{prop:adjacent} is not needed here.)

\ref{C:forms}
Let $B$ be a minimal independent transversal; by~\ref{C:defect}, $|B|\le2$.
If $B=\{x\}$ and $-x\in\V$, then $-x\in\V\setminus B$, and the exchange property (Lemma~\ref{lem:basic}\ref{basic:exchange}) shows that $\emptyset$ is a transversal, contradicting minimality.
Let $B=\{x,u\}$ with $u\neq-x$.
Then $x+u\in\V$ by Lemma~\ref{lem:pair}.
If $-x\in\V$, then $-x\notin B$ because $-x\ne u$, and the exchange property shows that $\{u\}$ is a transversal, which is independent and smaller than $B$; this is a contradiction.
The same argument shows $-u\notin\V$.
\end{proof}

\section{Examples}\label{sec:examples}

All claims in the following examples were verified by exact integer computations; the small ones are easily checked by hand.

\begin{example}[Unimodular defect one]\label{ex:defect1}
For $n\ge3$ and $2\le a\le n-1$ let
\[
\V=\{e_1,\dots,e_{n-1},\ \ x=-(e_1+\dots+e_{n-1})+a\,e_n,\ \ e_n,\ \ -e_n\}\subset\Z^n .
\]
This is the vertex set of a smooth Fano polytope $Q$ with $n+2$ vertices; the corresponding toric variety is the projective bundle $\PP_{\PP^{n-1}}(\mathcal{O}\oplus\mathcal{O}(a))$ \cite{Kleinschmidt88}.
We have $\det(e_1,\dots,e_{n-1},x)=a\ge2$, so $Q$ is not unimodular, while $\V\setminus\{x\}=\{e_1,\dots,e_n,-e_n\}$ is unimodular.
Hence $\ud(Q)=\udind(Q)=1$.
The facet $G=\{e_2,\dots,e_n,x\}$ contains $x$, and $G\setminus\{x\}$ is a flat ridge by Proposition~\ref{prop:one}.
\end{example}

\begin{example}[Unimodular defect two, non-adjacent bad vertices]\label{ex:nonadjacent}
In $\Z^6$ let
\[
a=(-1,0,-1,0,1,1),\qquad b=(0,-1,0,-1,1,1),\qquad x=(0,0,0,0,-1,1),
\]
and $\V=\{e_1,\dots,e_6,\,-e_6,\,a,\,b,\,x,\,-x\}$.
Then $\V$ is the vertex set of a smooth Fano polytope $Q$ with $11$ vertices and $45$ facets.
Its dual $P=Q^*$ is unimodularly equivalent to the smooth reflexive polytope with ID \texttt{F.6D.2386} in the Lorenz--Paffenholz list \cite{LP}.
Exactly $12$ six-element subsets of $\V$ have $|\det|\ge2$ (all with $|\det|=2$): the sets $T\cup\{x\}$ and $T\cup\{-x\}$, where $T$ runs over the five-element subsets of $\{e_1,e_2,e_3,e_4,a,b\}$.
Hence no single vertex is a transversal, $\{x,-x\}$ is an independent transversal, and $\ud(Q)=\udind(Q)=2$.
The facet $G=\{e_1,e_2,e_3,e_4,e_6,x\}$ contains $x$, and the $x$-coordinate with respect to $G$ is $m=-e_5^*$.
Its values are $1$ on $x$, $-1$ on $-x$, $e_5$, $a$, $b$, and $0$ on the other vertices, so $G\setminus\{x\}$ is a flat ridge (Proposition~\ref{prop:one}).
\end{example}

\begin{example}[Unimodular defect two, only adjacent bad vertices]\label{ex:adjacent}
In $\Z^5$ let
\[
p=(-1,0,0,0,1),\qquad q=(-1,-1,0,0,-1),\qquad r=(-2,-2,-1,-1,-2),
\]
and $\V=\{e_1,\dots,e_5,\,-e_1,\,p,\,q,\,r\}$.
Then $\V$ is the vertex set of a smooth Fano polytope $Q$ with $9$ vertices and $24$ facets.
Its dual $P=Q^*$ is unimodularly equivalent to the smooth reflexive polytope with ID \texttt{F.5D.0052} in the Lorenz--Paffenholz list \cite{LP}.
Exactly $12$ five-element subsets of $\V$ have $|\det|\ge2$, namely
\begin{gather*}
\{e_1,e_2,e_3,e_4,r\},\ \{e_1,e_3,e_4,e_5,r\},\ \{e_1,e_3,e_4,p,r\},\ \{e_2,e_3,e_4,e_5,r\},\\
\{e_2,e_3,e_4,-e_1,r\},\ \{e_2,e_3,e_4,p,q\},\ \{e_2,e_3,e_4,p,r\},\ \{e_2,e_3,p,q,r\},\\
\{e_2,e_4,p,q,r\},\ \{e_3,e_4,e_5,-e_1,r\},\ \{e_3,e_4,e_5,p,r\},\ \{e_3,e_4,-e_1,p,r\},
\end{gather*}
with $|\det|=4$ for $\{e_2,e_3,e_4,p,r\}$ and $|\det|=2$ for the others.
No vertex lies in all twelve sets, and $\{e_3,e_4\}$ meets all of them, so $\ud(Q)=2$.
The only pairs of non-adjacent vertices are $\{e_1,-e_1\}$, $\{e_1,p\}$ and $\{e_5,-e_1\}$ (note $e_1+p=e_5$ and $e_5-e_1=p$, in accordance with Lemma~\ref{lem:pair}), and none of them is a transversal: the first two miss $\{e_2,e_3,e_4,e_5,r\}$ and the third misses $\{e_1,e_2,e_3,e_4,r\}$.
Therefore every transversal with two elements consists of adjacent vertices, and $\udind(Q)=\infty$.
Theorem~\ref{thm:B} does not apply to $Q$, but Theorem~\ref{thm:A} does.

Let us follow the proof of Proposition~\ref{prop:adjacent} for the transversal $B=\{u_0,u_1\}=\{e_2,e_3\}$ and the facet $G=\{e_1,\dots,e_5\}$, so that $R=\{e_1,e_4,e_5\}$.
The neighbouring facets are $G_1=\{e_1,e_3,e_4,e_5,q\}$ and $G_0=\{e_1,e_2,e_4,e_5,r\}$, and
\[
y_1=q=-e_2+0\cdot e_3-e_1-e_5,\qquad y_0=r=-2e_2-e_3-2e_1-e_4-2e_5,
\]
so $b=0$ and $a=-2$; indeed $1-ab=1$.
Since $|b|\le1$, the ridge $W=R\cup\{q\}=\{e_1,e_4,e_5,q\}$ is flat.
The corresponding point of $\E(P)$ is the $e_3$-coordinate with respect to $G_1$, which is $m_W=e_3^*$: its values are $1$ on $e_3$, $-1$ on $r$ and $0$ on all other vertices.
The other ridge $R\cup\{r\}$ is not flat, since $|\det(e_3,R,r)|=|a|=2$.
\end{example}

\begin{example}[The hexagon]\label{ex:hexagon}
Let $Q\subset\R^2$ be the hexagon with vertices $\pm e_1$, $\pm e_2$, $\pm(e_1+e_2)$, the smooth Fano polygon of the del Pezzo surface of degree $6$.
It is unimodular, so $\ud(Q)=\udind(Q)=0$.
The triple $\{e_1,\,e_2,\,-e_1-e_2\}$ is independent and, like every subset of $\V$, a transversal; it is an independent transversal with three elements which is not minimal, as in Theorem~\ref{thm:B}\ref{C:size}.
By Proposition~\ref{prop:indep}\ref{B:hex}, every independent triple in a smooth Fano polytope looks like this: together with its negative it is the image of the vertex set of the hexagon under an injective linear map $\Z^2\to N$.
\end{example}

\section{Computations and further questions}\label{sec:data}\label{sec:further}

\subsection{Computational results}

We computed the unimodular defect of all smooth Fano polytopes of dimension at most $6$, using the classification lists produced with {\O}bro's algorithm \cite{Obro07} and made available by Lorenz and Paffenholz \cite{LP}.
For each polytope we determined all $n$-element subsets of $\V$ of determinant at least $2$ in absolute value and then the transversals with at most three elements.
All determinants were computed exactly in integer arithmetic, and every polytope in the lists was checked again to be a smooth Fano polytope.
The results are collected in Table~\ref{tab:defects}.

\begin{table}[ht]
\centering
\renewcommand{\arraystretch}{1.15}
\begin{tabular}{@{}rrrrrr@{}}
\toprule
$n$ & polytopes & $\ud=0$ & $\ud=1$ & $\ud=2$ & $\udind=\infty$\\
\midrule
2 & 5 & 5 & 0 & 0 & 0\\
3 & 18 & 16 & 2 & 0 & 0\\
4 & 124 & 96 & 28 & 0 & 0\\
5 & 866 & 554 & 302 & 10 & 10\\
6 & 7\,622 & 4\,097 & 3\,269 & 256 & 182\\
\bottomrule
\end{tabular}
\medskip
\caption{Unimodular defect of the smooth Fano polytopes of dimension $n\le6$. No polytope has $\ud\ge3$. The last column counts the polytopes without an independent transversal.}
\label{tab:defects}
\end{table}

For $n\le9$, condition \AP{} was already known by computer: {\O}bro (for $n\le8$) and Paffenholz (for $n=9$) verified that every facet of every smooth reflexive polytope of dimension $n\le9$ contains a nonzero point of $\E(P)$ \cite[\S3.4]{Brendel}.
The strong form of Ewald's conjecture was verified by {\O}bro for $n\le7$ \cite{ObroThesis}, \cite[Theorem~3.9]{CPS}.
Table~\ref{tab:defects} therefore shows that for $n\le6$ the known antipodal condition has a uniform explanation by Theorem~\ref{thm:A}.

We record the following observations.
\begin{itemize}[leftmargin=2em]
\item Polytopes without an independent transversal first appear in dimension $5$: for all ten polytopes of unimodular defect $2$ in dimension $5$, every transversal with two elements consists of adjacent vertices, as in Example~\ref{ex:adjacent}.
In dimension $6$ this happens for $182$ of the $256$ polytopes of unimodular defect $2$.
The other $74$ have an independent transversal with two elements.
\item For $n\le 6$ we verified Proposition~\ref{prop:indep} directly on all polytopes ($41\,327$ pairs of non-adjacent vertices and $1\,274$ independent triples), and we verified the flat ridges of Table~\ref{tab:certificates} for every transversal with at most two elements and every admissible facet ($2\,987\,254$ cases for the first four rows of the table and $5\,577\,049$ for the last two).
\end{itemize}

\subsection{A criterion for larger transversals}
The arguments for non-adjacent bad vertices in Section~\ref{sec:main} are special cases of a criterion that applies to transversals of any size. For a facet $G$ and $x\in G$ let the \emph{tube} $T(G,x):=\Big\{c\,x+\sum_{w\in G\setminus\{x\}}\lambda_w w:\ c\in\R,\ \lambda_w\ge0\Big\}$ be the set of points whose $G$-coordinates are all non-negative except possibly the $x$-coordinate. The following lemma can be deduced from Øbro [\cite{Obro07}, Lemma 2.3]

\begin{lemma}\label{lem:F}
Let $G$ be a facet of $Q$, $x\in G$ and $W=G\setminus\{x\}$, and let $z\ne x$ be a primitive lattice point of $Q$ in $T(G,x)$.
Then the $x$-coordinate of $z$ with respect to $G$ is $0$ or $-1$.
\end{lemma}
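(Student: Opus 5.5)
The plan is to reduce to the case where $z$ is a vertex, get the upper bound $c\le 0$ from heights over $G$, and get the lower bound $c\ge-1$ from the neighbouring facet across $W$; that lower bound is the hard step.

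\emph{Reduction.} The lattice points of $Q$ are its vertices and the origin. Boundary lattice points are vertices by the discussion after~\eqref{eq:cone-coeff}. The only interior lattice point is $0$, because $Q$ is reflexive ($P=Q^*$ is a lattice polytope). Since $z$ is primitive, $z\ne0$, so $z$ is a vertex of $Q$ with $z\ne x$.

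\emph{Coordinates.} Write $z=c\,x+\sum_{w\in W}\lambda_w w$ in the basis $G$. The coefficients are integers because $G$ is a $\Z$-basis, and $\lambda_w\ge0$ because $z\in T(G,x)$. By Lemma~\ref{lem:coord}, $c=\pair{z}{x^*_G}$ is the $x$-coordinate.

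\emph{Upper bound.} Apply $n_G$ to get $h_G(z)=c+\sum_w\lambda_w\le1$, with equality exactly when $z\in G$.
\begin{itemize}
\item If $z\in G$, then $z\in W$ since $z\ne x$, so $c=0$.
\item If $z\notin G$, then $c+\sum_w\lambda_w\le0$, hence $c\le0$.
\end{itemize}
It remains to exclude $c\le-2$.

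\emph{Lower bound.} Let $G'=W\cup\{x'\}$ be the other facet through $W$. By Lemma~\ref{lem:wall}, $x'=-x+\sum_w a_w w$ with $s=\sum_w a_w$. Substituting $x=-x'+\sum_w a_w w$ gives
\[
z=(-c)\,x'+\sum_{w\in W}(\lambda_w+c\,a_w)\,w,
\]
so the $x'$-coordinate of $z$ with respect to $G'$ is $-c\ge2$. I would look for a lattice point of $Q$ whose existence contradicts $c\le-2$.
\begin{itemize}
\item \emph{First attempt.} Show that the lattice point $z+x$, whose $G$-coordinates are $(c+1,\lambda)$, still lies in $Q$. It is a nonzero lattice point, so it would be a vertex. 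Iterating would then produce infinitely many vertices, or a vertex lying strictly inside the cone over the face $W$, which is impossible by~\eqref{eq:cone-coeff}.
\item \emph{Convexity variant.} The segment from $x$ to $z$ meets the hyperplane $\{x\text{-coordinate}=-1\}$ at a point of $Q$. Combining the heights $h_G$ and $h_{G'}$ of this point should give $\sum_w\lambda_w\le -c-1$ together with a matching inequality from $h_{G'}(z)=c(s-1)+\sum_w\lambda_w\le 1$ that forces $c\ge-1$.
\end{itemize}

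\emph{Main obstacle.} The difficulty is exactly this lower bound $c\ge-1$: neither $h_G$ nor $h_{G'}$ alone bounds $c$ from below. Here I would follow Øbro's proof of \cite[Lemma~2.3]{Obro07}, which the paper cites for this lemma. That argument works in the two-dimensional quotient $N/\Z(W\setminus\{w_0\})$ for a suitable $w_0\in W$. There $\bar x$, $\bar x'$, $\bar w_0$ are rays of a complete smooth fan, and a vertex in the tube with $x$-coordinate $\le-2$ would violate convexity of $Q$ around the ridge $W$.
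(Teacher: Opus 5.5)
Your reduction to a vertex and your upper bound $c\le 0$ are fine. The reduction is not even needed, since only $z\in Q$ is used. The problem is the lower bound $c\ge-1$, which is the real content of the lemma and is not proved. Your ``first attempt'' rests on the claim $z+x\in Q$, which you never justify. The ``convexity variant'' stops at ``should give''. The last step defers to a two-dimensional argument attributed to {\O}bro that you do not carry out. As it stands, this is a genuine gap.

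The missing step is short, and you had already written down the relevant quantity $h_{G'}(z)$. Suppose $c\le-2$. By Lemma~\ref{lem:wall}, $s\le 1$ is an integer and $h_{G'}(x)=s-1$, so
\[
1\ \ge\ h_{G'}(z)=c\,(s-1)+\sum_{w}\lambda_w=|c|\,(1-s)+\sum_w\lambda_w\ \ge\ 2\,(1-s)+\sum_w\lambda_w .
\]
Since $\lambda_w\ge 0$, integrality forces $s=1$ and $\sum_w\lambda_w\in\{0,1\}$.

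So your remark that $h_{G'}$ cannot bound $c$ from below is true only in the degenerate case $s=1$. That case needs no convexity argument around the ridge:
\begin{enumerate}
\item If $\sum_w\lambda_w=0$, then every $\lambda_w=0$ and $z=cx$ with $|c|\ge 2$, which is not primitive.
\item If $\sum_w\lambda_w=1$, then $h_{G'}(z)=1$, so $z$ lies in the facet $\conv(G')$. By the discussion after \eqref{eq:cone-coeff}, $z=x'$ or $z\in W$, giving $c=-1$ or $c=0$, a contradiction.
\end{enumerate}
This is exactly the paper's proof. Note that it uses the tube hypothesis $\lambda_w\ge 0$ for each $w$, not just for the sum, when passing from $\sum_w\lambda_w=0$ to $z=cx$. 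This is why, as Remark~\ref{rem:tube} shows, the hypothesis cannot be weakened to $\sum_w\lambda_w\ge 0$.
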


\begin{proof}
Write $z=c\,x+\sum_{w\in W}\lambda_w w$ with integers $c$ and $\lambda_w\ge0$.
As $z\in Q$, we have $c+\sum_w\lambda_w=h_G(z)\le1$, so $c\le1$, and $c=1$ would force $z=x$.
Suppose $c\le-2$.
Let $G'=W\cup\{x'\}$ be the other facet containing $W$, with $x'=-x+\sum_wa_ww$ and $s=\sum_wa_w\le1$ as in Lemma~\ref{lem:wall}.
Then
\[
1\ \ge\ h_{G'}(z)=c\,(s-1)+\sum_w\lambda_w=|c|\,(1-s)+\sum_w\lambda_w\ \ge\ 2\,(1-s)+\sum_w\lambda_w ,
\]
so $s=1$ and $\sum_w\lambda_w\le1$.
If $\sum_w\lambda_w=0$, then $z=c\,x$ is not primitive.
If $\sum_w\lambda_w=1$, then $h_{G'}(z)=1$, so $z$ lies in the facet $\conv(G')$: $z=\alpha x'+\sum_w\beta_ww$ with $\alpha,\beta_w\ge0$ and $\alpha+\sum_w\beta_w=1$; substituting $x'=-x+\sum_wa_ww$ shows $c=-\alpha\in[-1,0]$, a contradiction.
\end{proof}

\begin{remark}\label{rem:tube}
Lemmas~\ref{lem:F}, \ref{lem:good-values} and~\ref{lem:coord} give a so-called \emph{Tube criterion}: \emph{let $B$ be a transversal, $G$ a facet and $x\in G$ such that $G\setminus\{x\}$ contains no bad vertex and every $u\in B\setminus\{x\}$ satisfies $u\in T(G,x)$ or $-u\in Q\cap T(G,x)$; then $G\setminus\{x\}$ is a flat ridge.}
Propositions~\ref{prop:one} and~\ref{prop:nonadjacent} are the cases $B\setminus\{x\}\subseteq\{-x\}$ and $B=\{x,u\}$ with $u=-x+(x+u)\in T(G,x)$.
A bad vertex $u\ne\pm x$ adjacent to $x$ can lie in $T(G,x)$ only if $u\in G$, so the criterion does not apply to adjacent bad vertices.
The hypothesis $\lambda_w\ge0$ in Lemma~\ref{lem:F} cannot be weakened to $\sum_w\lambda_w\ge0$: for the smooth Fano polytope with vertices $(-1,1,2)$, $(-1,0,0)$, $(-1,0,1)$, $(0,-1,0)$, $(0,0,-1)$, $(1,0,-1)$, the facet $G=\{g_1,g_2,x\}$ with $g_1=(-1,1,2)$, $g_2=(-1,0,0)$, $x=(0,0,-1)$ and the vertex $z=(0,-1,0)=-2x-g_1+g_2$ give $\sum_w\lambda_w=0$ but $c=-2$.
\end{remark}

\subsection{The Ewald condition}
A flat ridge supplies one element of a lattice basis contained in $\E(P)$.
For independent transversals, the remaining elements can be looked for in a hyperplane.

\begin{proposition}\label{prop:split}
Let $B$ be an independent transversal with one or two elements, let $x\in B$, and put
\[
K(B,x)=\{m\in M:\ \pair{x}{m}=0,\ |\pair{v}{m}|\le1 \text{ for all } v\in\V\setminus B\}.
\]
Then $K(B,x)\subseteq\E(P)$.
If $K(B,x)$ contains a $\Z$-basis of the lattice $x^{\perp}\cap M$, then $\E(P)$ contains a $\Z$-basis of $M$, i.e., $P$ is Ewald.
\end{proposition}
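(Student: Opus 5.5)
The plan has two parts: first show that every element of $K(B,x)$ satisfies the defining inequalities of $\E(P)$ also on the bad vertices, and then complete a basis of $x^{\perp}\cap M$ by the functional coming from a flat ridge through $x$.

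\emph{Step 1: $K(B,x)\subseteq\E(P)$.}
Let $m\in K(B,x)$. By definition $|\pair{v}{m}|\le1$ for every good vertex $v$, so only the vertices of $B$ need checking. For $x$ itself we have $\pair{x}{m}=0$. If $B=\{x\}$ we are done. Otherwise $B=\{x,u\}$ with $x,u$ non-adjacent, since $B$ is independent, and Lemma~\ref{lem:pair} leaves two cases.
\begin{itemize}[leftmargin=2em]
\item If $u=-x$, then $\pair{u}{m}=-\pair{x}{m}=0$.
\item If $u\neq-x$, then $y=x+u$ is a vertex with $y\ne x$ and $y\ne u$, so $y$ is good. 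Hence $\pair{u}{m}=\pair{y}{m}-\pair{x}{m}=\pair{y}{m}$, and this has absolute value at most $1$.
\end{itemize}
In all cases $|\pair{v}{m}|\le1$ for all $v\in\V$, so $m\in\E(P)$.

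\emph{Step 2: a complementary element of $\E(P)$.}
Next I would produce $m_0\in\E(P)$ with $\pair{x}{m_0}=1$.
\begin{itemize}[leftmargin=2em]
\item If $B=\{x\}$ or $B=\{x,-x\}$, take any facet $G\ni x$. Proposition~\ref{prop:one} says $W=G\setminus\{x\}$ is flat.
\item If $B=\{x,u\}$ with $u\ne-x$ non-adjacent, Proposition~\ref{prop:nonadjacent} applies with this labelling, since its hypotheses are symmetric in $x$ and $u$. It gives a facet $G$ containing $x$ and $x+u$ such that $W=G\setminus\{x\}$ is flat.
\end{itemize}
In either case put $m_0=m_W=x^*_G$. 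By Proposition~\ref{prop:flat}, $m_0\in\E(P)$, and by construction $\pair{x}{m_0}=1$.

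\emph{Step 3: splitting the lattice.}
Because $\pair{x}{m_0}=1$, every $m\in M$ decomposes as
\[
m=\pair{x}{m}\,m_0+\bigl(m-\pair{x}{m}\,m_0\bigr),
\]
and the second summand lies in $x^{\perp}\cap M$. Moreover $\Z m_0\cap x^{\perp}=0$. Hence
\[
M=\Z m_0\oplus(x^{\perp}\cap M).
\]
So if $m_1,\dots,m_{n-1}\in K(B,x)$ form a $\Z$-basis of $x^{\perp}\cap M$, then $m_0,m_1,\dots,m_{n-1}$ is a $\Z$-basis of $M$. By Step 1 and Step 2 all of these lie in $\E(P)$, so $P$ is Ewald.

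There is no real obstacle; the only point needing care is Step 2. The flat ridge must be one of the form $G\setminus\{x\}$ for the chosen $x$, and not merely some flat ridge of $Q$, and it is exactly Propositions~\ref{prop:one} and~\ref{prop:nonadjacent} that provide a ridge of this form.
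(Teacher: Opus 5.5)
Your proposal is correct and follows essentially the same route as the paper. Both arguments use the case split from Lemma~\ref{lem:pair} to control the bad vertices, take $m_0=x^*_G$ from the flat ridge of Proposition~\ref{prop:one} or~\ref{prop:nonadjacent}, and split $M=\Z m_0\oplus(x^{\perp}\cap M)$ using $\pair{x}{m_0}=1$; you simply spell out the decomposition that the paper states without detail.
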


\begin{proof}
By Lemma~\ref{lem:pair}, $B$ is $\{x\}$, $\{x,-x\}$, or $\{x,u\}$ with $y=x+u$ a good vertex.
Let $m\in K(B,x)$.
Then $\pair{\pm x}{m}=0$, and in the third case $\pair{u}{m}=\pair{y}{m}-\pair{x}{m}=\pair{y}{m}$ has absolute value at most $1$.
So $m\in\E(P)$.
Let $m_0=x^*_G\in\E(P)$ be the point given by Proposition~\ref{prop:one} or~\ref{prop:nonadjacent}.
Since $\pair{x}{m_0}=1$, we have $M=\Z m_0\oplus(x^\perp\cap M)$, so $m_0$ together with a basis of $x^\perp\cap M$ is a basis of $M$.
\end{proof}

Recall that a transversal of set $S$ is called minimal if it has the smallest possible number of elements, $\ud(S)$, and \emph{inclusion-minimal} if no proper subset is a transversal.
\begin{question}\label{q:split}
Let $B$ be an inclusion-minimal independent transversal with one or two elements, and let $x\in B$.
Does $K(B,x)$ always contain a $\Z$-basis of $x^\perp\cap M$?
\end{question}

The answer is yes for all smooth Fano polytopes of dimension at most $6$: we checked all $18\,276$ pairs $(B,x)$ occurring in the $3\,675$ polytopes of dimension at most $6$ which are not unimodular and have an independent transversal.
Since Ewald's conjecture is known in these dimensions, the interest of Question~\ref{q:split} lies in a proof that works in all dimensions; a positive answer would prove Ewald's conjecture for all smooth Fano polytopes with an independent transversal.
The statement is not a formal consequence of unimodularity: for the unimodular set $\{e_1,e_2\}\subset\Z^2$ and the additional vector $x=(1,3)$, the lattice $x^\perp\cap M$ is generated by $(3,-1)$, which takes the value $3$ at $e_1$.
For two adjacent bad vertices we do not know an analogue of Proposition~\ref{prop:split}.

\subsection{Larger unimodular defect}
By~\eqref{eq:defect-bound} every smooth Fano polytope has finite unimodular defect, so the statement ``\AP{} holds for smooth Fano polytopes of every unimodular defect'' is equivalent to \AP{} for all smooth reflexive polytopes.
Theorem~\ref{thm:A} settles defects $0$, $1$ and $2$.

\begin{question}\label{q:defect3}
Is there a smooth Fano polytope $Q$ with $\ud(Q)\ge3$?
\end{question}

By Table~\ref{tab:defects}, such a polytope has dimension at least $7$.
If the answer is negative, \AP{} holds for all smooth reflexive polytopes by Theorem~\ref{thm:A}.
By Theorem~\ref{thm:B}\ref{C:size}, a minimal transversal with three elements always contains two adjacent vertices.
The method of Proposition~\ref{prop:adjacent} extends as follows.
Let $B$ be a transversal contained in a facet $G$, with $|B|=k$, and let $R=G\setminus B$.
Writing vertices in the basis $G$ and keeping only the $B$-coordinates defines a projection $\pi\colon N\to\Z^k$ with kernel $\Z R$, and for good vertices $y_1,\dots,y_k$ one has
\[
\det\big(\pi(y_1),\dots,\pi(y_k)\big)=\pm\det(y_1,\dots,y_k,R)\in\{0,\pm1\},
\]
because $R$ consists of good vertices.
So the images of the good vertices form a unimodular set in $\Z^k$.
If $Y$ is a set of $k-1$ good vertices such that $R\cup Y$ is a ridge, then $R\cup Y$ is flat if and only if all maximal minors of the $(k-1)\times k$ matrix with rows $\pi(y)$, $y\in Y$, lie in $\{0,\pm1\}$.
For $k=2$ this is the condition $|a|\le1$ or $|b|\le1$ of Proposition~\ref{prop:adjacent}, which we derived from a single determinant; for $k\ge3$ we do not know whether a suitable $Y$ always exists.

\section*{AI declaration and Acknowledgment}

Example \ref{ex:adjacent}, \ref{ex:hexagon} and table \ref{tab:certificates} in section 7, subsection 8.1, 8.3, table \ref{tab:defects} are generated by Claude opus 5.5. GPT sol 6.1 has been used to verify the whole paper and corrects remark 8.2. Besides, the language of the whole paper is polished by GPT sol 6.1.

\end{document}